\numberwithin{equation}{section}
\numberwithin{figure}{section}
\theoremstyle{plain}
\newtheorem{thm}{\protect\theoremname}[section]
\theoremstyle{remark}
\newtheorem{notation}[thm]{\protect\notationname}
\theoremstyle{plain}
\newtheorem{cor}[thm]{\protect\corollaryname}
\theoremstyle{plain}
\newtheorem{lem}[thm]{\protect\lemmaname}
\theoremstyle{definition}
\newtheorem{defn}[thm]{\protect\definitionname}
\newtheorem{example}[thm]{\protect\examplename}
\theoremstyle{plain}
\newtheorem{prop}[thm]{\protect\propositionname}
\theoremstyle{remark}
\newtheorem{rem}[thm]{\protect\remarkname}
\theoremstyle{plain}
\newtheorem{lyxalgorithm}[thm]{\protect\algorithmname}
\providecommand{\algorithmname}{Algorithm}
\providecommand{\corollaryname}{Corollary}
\providecommand{\definitionname}{Definition}
\providecommand{\examplename}{Example}
\providecommand{\lemmaname}{Lemma}
\providecommand{\notationname}{Notation}
\providecommand{\propositionname}{Proposition}
\providecommand{\remarkname}{Remark}
\providecommand{\theoremname}{Theorem}
\begin{document}
\global\long\def\cc{\mathbb{C}}%

\global\long\def\ff{\mathbb{F}}%

\global\long\def\pp{\mathbb{P}}%

\global\long\def\rr{\mathbb{R}}%

\global\long\def\zz{\mathbb{Z}}%

\global\long\def\tensor{\otimes}%

\global\long\def\extalg{\bigwedge}%

\global\long\def\spanop{\operatorname{span}}%

\global\long\def\eval{\operatorname{eval}}%

\global\long\def\inc{\operatorname{inc}}%

\global\long\def\shift#1#2{\operatorname{shift}_{#1\to#2}}%

\global\long\def\slowshift#1#2{\mathop{N_{#1\to#2}}}%

\global\long\def\grassman#1#2{\operatorname{Gr}(#1,#2)}%

\title{A Hilton-Milner theorem for exterior algebras}
\author{Denys Bulavka, Francesca Gandini, and Russ Woodroofe}
\thanks{Work started while the first author was at the Department of Applied Mathematics, Faculty of Mathematics and Physics, Charles University, Prague, Czech Republic. Work of the first author is partially supported by the GAČR grant no. 22-19073S and by the Israel Science Foundation grant ISF-2480/20.  Work of the second author is supported in part by the Slovenian Research Agency (research projects N1-0160, J1-3003). Work of the third author is supported in part by the Slovenian Research
Agency research program P1-0285 and research projects J1-9108, N1-0160, J1-2451, J1-3003, and J1-50000.}
\address{Einstein Institute of Mathematics, Hebrew University, Jerusalem 91904,
Israel}
\curraddr{Department of Mathematics \& Statistics, Dalhousie University, 6297 Castine Way, PO BOX 15000, Halifax, NS, Canada, B3H 4R2}
\email{denys.bulavka@dal.ca}
\urladdr{\url{https://kam.mff.cuni.cz/~dbulavka/}}
\address{Department of Mathematics, Statistics, and Computer Science, St. Olaf
College, Northfield MN, USA}
\email{fra.gandi.phd@gmail.com}
\urladdr{\url{https://sites.google.com/a/umich.edu/gandini/}}
\address{Univerza na Primorskem, Glagoljaška 8, 6000 Koper, Slovenia}
\email{russ.woodroofe@famnit.upr.si}
\urladdr{\url{https://osebje.famnit.upr.si/~russ.woodroofe/}}
\begin{abstract}
Recent work of Scott and Wilmer and of Woodroofe extends the Erd\H{o}s-Ko-Rado
theorem from set systems to subspaces of $k$-forms in an exterior
algebra. We prove an extension of the Hilton-Milner theorem to the
exterior algebra setting, answering in a strong way a question asked
by these authors.
\end{abstract}

\maketitle

\section{\label{sec:Introduction}Introduction}

A family of sets $\mathcal{F}$ is \emph{pairwise-intersecting} if
every pair of sets in $\mathcal{F}$ have nonempty intersection. Erd\H{o}s,
Ko, and Rado \cite{Erdos/Ko/Rado:1961} gave an upper bound on the
size of a pairwise-intersecting family of small sets, and characterized
the families that achieve the upper bound.
\begin{thm}[Erd\H{o}s-Ko-Rado]
\label{thm:EKR} Let $k\leq n/2$. If $\mathcal{F}\subseteq \binom{[n]}{k}$
is a pairwise-intersecting family of sets, then $\left|\mathcal{F}\right|\leq \binom{n-1}{k-1}$.
Moreover, if $k<n/2$ and $\left|\mathcal{F}\right|$ achieves the
upper bound, then $\mathcal{F}$ consists of all the $k$-subsets
containing some fixed element.
\end{thm}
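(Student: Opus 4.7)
The plan is to establish the cardinality bound via Katona's cyclic permutation argument, and then to derive the uniqueness characterization by the shifting (compression) method of Erd\H{o}s-Ko-Rado.

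For the bound, the first step is the following cyclic lemma: on any fixed cyclic ordering of $[n]$, a pairwise-intersecting family of $k$-\emph{arcs} (sets of $k$ cyclically consecutive elements) has at most $k$ members, provided $k \leq n/2$. This is a short combinatorial argument: fix one arc $A$ in the family; every other arc must overlap $A$, hence must start at one of $2(k-1)$ positions. The constraint $k \leq n/2$ then forces that from each pair of "antipodal" starting positions at most one arc is included, leaving at most $k-1$ additional arcs besides $A$.

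The second step is a double count. I would count pairs $(\sigma, A)$ where $\sigma$ is a cyclic ordering of $[n]$ and $A \in \mathcal{F}$ appears as an arc in $\sigma$. Each $k$-subset appears as an arc in exactly $n \cdot k!(n-k)!$ of the $n!$ cyclic orderings, while by the cyclic lemma every ordering accounts for at most $k$ elements of $\mathcal{F}$. Solving the resulting inequality gives
\[
|\mathcal{F}| \cdot n \cdot k!(n-k)! \;\leq\; n! \cdot k,
\]
which rearranges to $|\mathcal{F}| \leq \binom{n-1}{k-1}$.

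For the uniqueness statement, I would invoke the shifting operators $S_{ij}$ for $i<j$, each replacing a set $A$ containing $j$ but not $i$ by $(A \setminus \{j\}) \cup \{i\}$ whenever the resulting set is new to $\mathcal{F}$. These shifts preserve both $|\mathcal{F}|$ and the pairwise-intersecting property, producing a \emph{shifted} extremal family $\mathcal{F}'$. The main obstacle, and the conceptual heart of the uniqueness argument, is to show that when $k < n/2$ the only shifted pairwise-intersecting family of maximum size is the star $\{A \in \binom{[n]}{k} : 1 \in A\}$, and then that any original $\mathcal{F}$ shifting to a star must itself be a star. The strict inequality $k < n/2$ is essential here: it allows the construction of two disjoint $k$-sets avoiding any prescribed element, which both rules out non-star shifted extremizers by a direct pairing argument and forces the reverse "unshifting" to preserve the star structure.
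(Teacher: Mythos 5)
The paper does not prove this statement: it is the classical Erd\H{o}s--Ko--Rado theorem, quoted as background and cited to the original 1961 paper, so there is no internal proof to compare against. Judged on its own, your argument for the cardinality bound is a correct and essentially complete rendition of Katona's cycle method: the arc lemma (at most $k$ pairwise-intersecting arcs on a cycle when $k\leq n/2$, via the pairing of the $2(k-1)$ arcs meeting a fixed one into $k-1$ disjoint pairs) and the double count over the $n!$ circular placements both check out, and the arithmetic $|\mathcal{F}|\cdot n\cdot k!(n-k)!\leq n!\cdot k$ does yield $\binom{n-1}{k-1}$.

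The uniqueness half, however, has a genuine gap. You correctly identify the two claims needed --- that a shifted extremal family is a star, and that an extremal family whose shift is a star must itself be a star --- but you prove neither; you only assert that $k<n/2$ ``allows the construction of two disjoint $k$-sets'' that makes both work. The second claim is the real difficulty and cannot be waved through: shifting does not preserve the property of being a non-star, so ``unshifting'' a star is not a routine pairing argument. The standard resolutions are either (a) to analyze the structure of $\mathcal{F}$ at the first step where some $S_{ij}$ turns it into a star (before that step every set must contain $i$ or $j$, and one then applies a cross-intersecting bound), which is exactly the strategy this paper adapts to exterior algebras to prove its Hilton--Milner analogue, or (b) to prove the Hilton--Milner bound $\binom{n-1}{k-1}-\binom{n-k-1}{k-1}+1<\binom{n-1}{k-1}$ outright, from which uniqueness is immediate --- but that is a strictly stronger theorem. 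Alternatively, Katona's cycle method itself can be pushed to the equality case (all $k$ arcs in an extremal cyclic configuration share a point, plus a connectedness argument over cyclic orders), which would keep your proof self-contained; as written, though, the uniqueness statement remains unproven.
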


There are many generalizations and analogues of Theorem~\ref{thm:EKR}.
In this article, we focus on the extension to exterior algebras that
was considered by Scott and Wilmer \cite{Scott/Wilmer:2021} and by
the third author \cite{Woodroofe:2022}. We first fix some notation:
\begin{notation}
\label{not:FixedNotation}
Through the paper, let $\ff$ be a field.  We assume for expository purposes that the characteristic is not
$2$, although the fundamental techniques are independent of characteristic. Let $V$ be a vector space of dimension $n$ over $\ff$. Let $\mathbf{e}=\left\{ e_{1},\dots,e_{n}\right\} $
be a given basis for $V$. If $I$ is a subset of $[n]$, then let
$V^{(I)}$ be the subspace spanned by $\{e_{j}:j\notin I\}$, and
write $V^{(i)}$ for $V^{(\{i\})}$.
\end{notation}

A subset $L$ of the exterior algebra is \emph{self-annihilating}
if $L\wedge L=0$; that is, if for every $x,y\in L$, it holds that
$x\wedge y=0$.
\begin{thm}[Scott and Wilmer]
\label{thm:EKRext}Let $k\leq n/2$. If $L$ is a self-annihilating
subspace of $\extalg^{k}V$, then $\dim L\leq\binom{n-1}{k-1}$. 
\end{thm}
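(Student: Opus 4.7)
The plan is to pass from the given subspace $L$ to the homogeneous ideal it generates in $\extalg V$, and combine the top-degree duality of the exterior algebra with a Kruskal--Katona-type bound on ideal growth.

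Let $I = L \cdot \extalg V$ be the homogeneous ideal generated by $L$, so that $I_m = L \wedge \extalg^{m-k} V$ for $m \geq k$. For any $y \in L$, $x \in L$, and $z \in \extalg^{m-k} V$, associativity together with the self-annihilating hypothesis gives
\[
y \wedge (x \wedge z) = (y \wedge x) \wedge z = 0,
\]
so $L \wedge I = 0$. In particular, $L \wedge I_{n-k} = 0$ in $\extalg^n V \cong \ff$. Because the wedge pairing $\extalg^k V \times \extalg^{n-k} V \to \extalg^n V$ is perfect, $I_{n-k}$ is contained in an annihilator of $L$ of dimension $\binom{n}{k} - \dim L$. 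This yields the key inequality
\[
\dim L + \dim I_{n-k} \leq \binom{n}{k}.
\]

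When $n = 2k$ we have $I_{n-k} = I_k = L$, so the inequality above immediately gives $2 \dim L \leq \binom{2k}{k}$, i.e.\ $\dim L \leq \binom{2k-1}{k-1} = \binom{n-1}{k-1}$. For $n > 2k$ it remains to lower-bound $\dim I_{n-k}$ in terms of $\dim L$. Here I would invoke the exterior-algebra analogue of the Kruskal--Katona theorem: if $\dim L \geq \binom{n-1}{k-1}$, then $\dim I_{n-k} \geq \binom{n-1}{k}$, with equality for example when $L$ is the ``star'' subspace $\spanop\{e_1 \wedge e_J : J \subseteq \{2,\dots,n\},\ |J| = k-1\}$, whose ideal in degree $n-k$ is spanned by the monomials $e_1 \wedge e_S$ for $S \subseteq \{2,\dots,n\}$ of size $n-k-1$. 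Combining this lower bound with the key inequality yields $\dim L \leq \binom{n-1}{k-1}$.

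The main obstacle is the Kruskal--Katona step. The standard route is to reduce, via a generic change of basis, to the case in which $L$ is spanned by standard basis monomials $e_I$ whose index sets form a combinatorially shifted (or Borel-fixed) family; such a reduction preserves the Hilbert function of the generated ideal. One then applies the classical bound on iterated upper shadows of shifted set systems, checking that the ``star'' is the extremal family at the threshold size $\binom{n-1}{k-1}$. Setting this reduction up carefully, and confirming that monotonicity of the upper shadow as a function of family size gives the bound needed for all $\dim L \geq \binom{n-1}{k-1}$, is where the bulk of the effort will lie.
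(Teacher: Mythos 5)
Your argument is correct in outline, but it takes a genuinely different route from the one this paper suggests: here Theorem~\ref{thm:EKRext} is quoted from Scott and Wilmer, and the machinery of Sections~\ref{sec:Preliminaries}--\ref{sec:SlowerShifting} would instead prove it by slow-shifting $L$ to a subspace of the same dimension with a monomial basis (Corollary~\ref{cor:SlowShiftingBorelFixed}, with self-annihilation preserved by Lemma~\ref{lem:LimitsPreserveCrossAnn}), whose variable sets then form a pairwise-intersecting family, so the combinatorial Erd\H{o}s--Ko--Rado bound applies. Your route --- pairing $L$ against $I_{n-k}=L\wedge\extalg^{n-2k}V$ via the perfect pairing $\extalg^{k}V\times\extalg^{n-k}V\to\extalg^{n}V$ and then lower-bounding $\dim I_{n-k}$ by a shadow estimate --- is essentially Scott and Wilmer's original argument, and the steps check out: $L\wedge I_{n-k}=0$ follows from associativity, the case $n=2k$ is immediate since $I_{k}=L$, and for $n>2k$ the Lov\'asz form of Kruskal--Katona (a family of $(n-k)$-subsets of $[n]$ of size at least $\binom{n-1}{n-k}=\binom{n-1}{k-1}$ has iterated lower shadow at level $k$ of size at least $\binom{n-1}{k}$, applied to the complements) gives exactly $\dim I_{n-k}\geq\binom{n-1}{k}$, hence $\dim L\leq\binom{n}{k}-\binom{n-1}{k}=\binom{n-1}{k-1}$. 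For the step you flag as the main obstacle, two simplifications: you do not need genericity or a Borel-fixed/shifted family, since Kruskal--Katona holds for arbitrary set families; taking initial terms with respect to any multiplicative monomial order already shows that $\operatorname{in}(I)_{n-k}$ contains the iterated upper shadow of the $\dim L$ leading monomials of $L$ (if $e_{S}=\operatorname{in}(x)$ and $T\cap S=\emptyset$, then $\operatorname{in}(x\wedge e_{T})=\pm e_{S\cup T}$), while $\dim I_{n-k}=\dim\operatorname{in}(I)_{n-k}$. This also makes the argument characteristic-free. The trade-off between the two approaches is that yours reaches the bound quickly but discards all structural information about $L$, whereas the paper's slower shifting is designed precisely to retain enough structure to push on to the Hilton--Milner refinement.
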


The bound of Theorem~\ref{thm:EKRext} is an extension of the bound
of Theorem~\ref{thm:EKR}, as follows. Given a set $F=\{i_{1}<i_{2}<\cdots<i_{k}\}$
in $\mathcal{F}$, we represent it with the exterior monomial $e_{F}=e_{i_{1}}\wedge\cdots\wedge e_{i_{k}}$
in $\extalg^{k}V$. Now taking $\{e_{F}:F\in\mathcal{F}\}$ as a basis,
using that $e_{F}\wedge e_{G}=0$ if and only if $F\cap G\neq\emptyset$,
and applying the distributive axiom, we obtain the bound of Theorem~\ref{thm:EKR}
from Theorem~\ref{thm:EKRext}. With this tight connection, Theorem~\ref{thm:EKRext}
may indeed be viewed as a categorification of the Erdős-Ko-Rado bound.

Scott and Wilmer asked in \cite[Section 2.2]{Scott/Wilmer:2021} whether the characterization of the maximal
families in Theorem~\ref{thm:EKR} extends to the exterior algebra
setting. The third author
of the current article asked in \cite[Question 5.1]{Woodroofe:2022}
the even stronger question of whether the Hilton-Milner upper bound
extends. We will answer both questions in the affirmative. 

We first recall the theorem of Hilton and Milner \cite{Hilton/Milner:1967}.
We say that a pairwise-intersecting set family is \emph{nontrivial}
if $\bigcap_{F\in\mathcal{F}}F$ is empty.
\begin{thm}[Hilton-Milner]
\label{thm:HM}Let $k\leq n/2$. If $\mathcal{F}\subseteq\binom{[n]}{k}$
is a nontrivially pairwise-intersecting family of sets, then $\left|\mathcal{F}\right|\leq\binom{n-1}{k-1}-\binom{n-k-1}{k-1}+1$. 
\end{thm}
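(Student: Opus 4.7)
The plan is to apply the shifting (compression) method of Frankl. For each pair $i<j$ in $[n]$, the shift operator $S_{ij}$ replaces each $F\in\mathcal{F}$ with $(F\setminus\{j\})\cup\{i\}$ whenever $j\in F$, $i\notin F$, and the replacement is not already in $\mathcal{F}$. A routine verification shows that $S_{ij}(\mathcal{F})$ has the same cardinality as $\mathcal{F}$ and remains pairwise-intersecting. Iterating over all such pairs produces a left-compressed family $\mathcal{F}'$ with $|\mathcal{F}'|=|\mathcal{F}|$.

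A first subtlety is that shifting may destroy nontriviality, so I would split into two cases. If $\mathcal{F}'$ is still nontrivial, then by left-compression the lexicographically smallest set of $\mathcal{F}'$ avoiding $1$ is forced to be $F_0=\{2,\dots,k+1\}$. If not, then at some shift step the family transitioned from nontrivial to trivial, and I would rewind to that critical step and use its rigid structure to bound $|\mathcal{F}|$ directly.

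Focusing on the first case, partition $\mathcal{F}'=\mathcal{A}\sqcup\mathcal{B}$ according to whether $1\in F$. Since every member of $\mathcal{F}'$ must meet $F_0$, a direct count of $k$-sets through $1$ that meet $F_0$ yields the bound $|\mathcal{A}|\leq\binom{n-1}{k-1}-\binom{n-k-1}{k-1}$. It remains to bound the sum $|\mathcal{A}|+|\mathcal{B}|$ by $\binom{n-1}{k-1}-\binom{n-k-1}{k-1}+1$; note that this is \emph{not} achieved by forcing $|\mathcal{B}|=1$, since small examples show that the Hilton--Milner bound can be saturated with $|\mathcal{B}|$ much larger than $1$.

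The main obstacle is exactly this sharp trade-off: as $|\mathcal{B}|$ grows, the mutual-intersection constraints force $|\mathcal{A}|$ to shrink in a compensating way. I would establish the trade-off either by applying the Kruskal--Katona shadow theorem to $\mathcal{B}\subseteq\binom{[2,n]}{k}$, using that each $G\in\mathcal{B}$ forces every $F\in\mathcal{A}$ to meet $G$, or by a more direct compression-based argument using that $(G\setminus\{j\})\cup\{1\}\in\mathcal{A}$ for each $G\in\mathcal{B}$ and $j\in G$. Combined with the boundary analysis in the case that shifting destroys nontriviality, this trade-off is the technical core of the classical Hilton--Milner proof.
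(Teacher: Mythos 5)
Theorem~\ref{thm:HM} is quoted in the paper from \cite{Hilton/Milner:1967} and is not proved there, so there is no in-paper proof to compare against; the closest analogue is the paper's proof of Theorem~\ref{thm:HMext}, which follows the architecture you describe (compress; if nontriviality survives, exploit shiftedness; if it is destroyed, exploit the structure at the critical step; in either case finish with a cross-intersecting bound).

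Your outline correctly identifies that architecture, but both of the steps that carry the actual content are asserted rather than proved. First, in your main case, after observing $F_{0}=\{2,\dots,k+1\}\in\mathcal{F}'$ and $|\mathcal{A}|\leq\binom{n-1}{k-1}-\binom{n-k-1}{k-1}$, the inequality you still need, $|\mathcal{A}|+|\mathcal{B}|\leq\binom{n-1}{k-1}-\binom{n-k-1}{k-1}+1$, is precisely the nonempty cross-intersecting theorem of Hilton and Milner (\cite[Theorem 2]{Hilton/Milner:1967}, applied to $\{F\setminus\{1\}:F\in\mathcal{A}\}\subseteq\binom{[2,n]}{k-1}$ and $\mathcal{B}\subseteq\binom{[2,n]}{k}$). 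You rightly note that the naive bound $|\mathcal{B}|\leq1$ fails and that a sharp trade-off is required, and ``apply Kruskal--Katona'' is indeed how Frankl and F\"uredi establish it \cite{Frankl/Furedi:1986}; but the quantitative shadow estimate showing that the loss in $|\mathcal{A}|$ exactly compensates the gain in $|\mathcal{B}|$ is the heart of the theorem, and it is missing. Second, the case in which compression destroys nontriviality is dismissed with ``rewind to that critical step and use its rigid structure''; concretely, at that step every set contains $i$ or $j$, one relabels these to $1$ and $2$, continues shifting only on coordinates $3,\dots,n$ so as to preserve this property, and then again needs the cross-intersecting bound. As written, the proposal is a correct plan whose two essential lemmas remain unproved, so it does not yet constitute a proof.
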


Our main theorem will extend Theorem~\ref{thm:HM} to exterior algebras.
We say that a self-annihilating subset $L$ of $\extalg V$ is \emph{nontrivial}
if there is no $1$-form in $V=\extalg^{1}V$ that annihilates $L$.
\begin{thm}
\label{thm:HMext} Let $k\leq n/2$. If $L$ is a nontrivially self-annihilating
subspace of $\extalg^{k}V$, then $\dim L\leq\binom{n-1}{k-1}-\binom{n-k-1}{k-1}+1$.
\end{thm}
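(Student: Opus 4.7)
The plan is to extend the algebraic shifting framework that proves Theorem~\ref{thm:EKRext} so that it additionally respects non-triviality. We will introduce shift operations $\shift{j}{i}$ (for $i<j$) on subspaces of $\extalg^{k}V$, together with a one-parameter slow shift $\slowshift{j}{i}$. After a suitable sequence of shifts, we aim to produce a subspace $\widetilde{L}\subseteq\extalg^{k}V$ of the same dimension as $L$, still self-annihilating, still non-trivial, and spanned by exterior monomials $\{e_{F}:F\in\widetilde{\mathcal{F}}\}$ for some family $\widetilde{\mathcal{F}}\subseteq\binom{[n]}{k}$.

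Given such a monomial description, the problem reduces to the classical setting. Since $e_{F}\wedge e_{G}=0$ if and only if $F\cap G\neq\emptyset$, the condition $\widetilde{L}\wedge\widetilde{L}=0$ is equivalent to $\widetilde{\mathcal{F}}$ being pairwise-intersecting. A $1$-form $v=\sum_{i}c_{i}e_{i}$ annihilates $\widetilde{L}$ precisely when $\{i:c_{i}\neq 0\}\subseteq\bigcap_{F\in\widetilde{\mathcal{F}}}F$, so the non-triviality of $\widetilde{L}$ translates into $\bigcap_{F\in\widetilde{\mathcal{F}}}F=\emptyset$. Applying Theorem~\ref{thm:HM} to $\widetilde{\mathcal{F}}$ then yields the desired bound $\dim L=|\widetilde{\mathcal{F}}|\leq\binom{n-1}{k-1}-\binom{n-k-1}{k-1}+1$.

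The main obstacle is the preservation of non-triviality under shifting. A single combinatorial shift $\shift{j}{i}$ can in principle collapse a non-trivial self-annihilating subspace into a trivial one---for instance, by pushing $L$ into $e_{i}\wedge\extalg^{k-1}V^{(i)}$. To control this, we plan to use the slow shift $\slowshift{j}{i}$ together with a genericity argument: the set of trivial subspaces is a proper closed subvariety of the Grassmannian $\grassman{\dim L}{\extalg^{k}V}$ (cut out by the existence of a $1$-form annihilating the subspace), so for generic parameter choices the shifted image of $L$ avoids this locus. As a complementary route, one may try to induct on $n$ via the decomposition $\dim L=\dim(L\cap\extalg^{k}V^{(1)})+\dim\eval_{1}(L)$: the first summand is a self-annihilating subspace of $\extalg^{k}V^{(1)}$ controlled by Theorem~\ref{thm:EKRext}, while the second cross-annihilates with it, and a careful cross-annihilation bound should produce a recurrence matching the Hilton-Milner quantity.
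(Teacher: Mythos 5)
Your reduction to the monomial case is the right frame, and you have correctly located the central difficulty: shifting can destroy non-triviality. But the fix you propose does not work, and the failure is structural rather than technical. The slow shift is by definition the \emph{limit} at $t=0$ of the action of $N_{j\to i}(t)$; it is a degeneration, not a generic member of a family. For generic (nonzero) values of $t$ the map $N_{j\to i}(t)$ is simply an invertible change of basis, so $N_{j\to i}(t)L$ is isomorphic to $L$ as a subspace configuration and nothing has been gained; all of the monomializing power of the operation lives precisely at the special fibre $t=0$. You are right that the trivial subspaces form a closed subvariety of the Grassmannian, but that observation cuts against you: closed loci are exactly where limits of one-parameter families land. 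There is no parameter left to choose generically, and indeed the shifted subspace genuinely can become trivial --- this already happens for combinatorial shifting of set systems, which is why every proof of the classical Hilton--Milner theorem must analyze the step at which the family becomes trivial rather than avoid it.

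The paper's resolution is to extract structure from the original subspace at the moment triviality appears. The key technical lemma states that if $\ell\wedge\bigl(\slowshift ji L\bigr)=0$ for a $1$-form $\ell$, then $\ell\wedge(e_{i}-e_{j})\wedge L=0$, and moreover $\ell\wedge(e_{i}-e_{j})\neq0$ when $L$ is nontrivial; this is the exterior analogue of the fact that if $\shift ji\mathcal{F}$ is trivial then every set of $\mathcal{F}$ meets $\{i,j\}$. One then changes basis so that $f_{1}\wedge f_{2}$ annihilates $L$, continues shifting only over the remaining indices (which preserves the $2$-form annihilation), and closes the argument with a separate lemma: a nontrivial self-annihilating subspace annihilated by $f\wedge g$, with some element killed by $f$ but not $g$, decomposes into three pieces whose dimensions are controlled by the cross-annihilating bound of Theorem~\ref{thm:CrossAnnihilating-ext}. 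Your alternative inductive route via $\dim L=\dim(L\cap\extalg^{k}V^{(1)})+\dim\eval_{1}(L)$ is closer in spirit to this cross-annihilation step, but as stated it is only a hope for a recurrence; the actual decomposition used in the paper is organized around the annihilating $2$-form produced by the failed shift, not around restriction to a coordinate hyperplane. Without an argument of this kind your proof has a genuine gap at its pivotal step.
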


Theorem~\ref{thm:HMext} categorifies Theorem~\ref{thm:HM} in the
same way that Theorem~\ref{thm:EKRext} categorifies the bound of Theorem~\ref{thm:EKR}.
Since $\binom{n-k-1}{k-1}>1$ when $k<n/2$, we recover the exterior
extension of the characterization of maximal families in Theorem~\ref{thm:EKR}.
\begin{cor}
Let $k<n/2$. If $L$ is a self-annihilating subspace of $\extalg^{k}V$
such that $\dim L$ achieves the upper bound of $\binom{n-1}{k-1}$,
then $L$ is annihilated by a $1$-form.
\end{cor}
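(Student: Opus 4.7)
The plan is to deduce this directly from Theorem~\ref{thm:HMext} by contrapositive. I would suppose that $L$ is self-annihilating with $\dim L = \binom{n-1}{k-1}$, and assume for contradiction that no $1$-form annihilates $L$. Then $L$ is nontrivial in the sense of Theorem~\ref{thm:HMext}, and that theorem yields $\dim L \leq \binom{n-1}{k-1} - \binom{n-k-1}{k-1} + 1$. Combined with the dimension hypothesis, this forces $\binom{n-k-1}{k-1} \leq 1$, contradicting the inequality $\binom{n-k-1}{k-1} > 1$ asserted in the excerpt.

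That inequality is the only remaining piece to verify. Since $k < n/2$ is equivalent to $n - k - 1 \geq k$, for $k \geq 2$ we obtain $\binom{n-k-1}{k-1} \geq \binom{k}{k-1} = k \geq 2$, which suffices. The boundary case $k = 1$ merits a brief separate remark: a self-annihilating subspace of $V = \extalg^{1}V$ is at most one-dimensional, since any two linearly independent vectors in $V$ have nonzero wedge; and if $L = \spanop(v)$ then $v$ itself is a $1$-form annihilating $L$, so the conclusion holds trivially.

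The main obstacle in this line of reasoning is of course Theorem~\ref{thm:HMext} itself. Once that result is in hand, the present corollary reduces to the short numerical verification above, with no further conceptual content: it is exactly the strict positivity of the binomial defect that promotes the Hilton--Milner bound into a characterization of extremal families at the Erd\H{o}s--Ko--Rado bound.
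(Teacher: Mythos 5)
Your proof is correct and is essentially the paper's own argument: the corollary is stated there as an immediate consequence of Theorem~\ref{thm:HMext} via the observation that $\binom{n-k-1}{k-1}>1$ when $k<n/2$. Your separate treatment of the boundary case $k=1$ (where that binomial equals $1$ but the conclusion holds trivially) is a welcome extra bit of care that the paper glosses over.
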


Theorem~\ref{thm:HMext} was earlier shown in \cite{Ivanov/Kose:2023}
for the restrictive case where $k=2$. 

Our strategy is to adapt proofs of Theorem~\ref{thm:HM}, variously
due to Frankl and/or Füredi \cite{Frankl:1987,Frankl:2019,Frankl/Furedi:1986}
and to Hurlbert and Kamat \cite{Hurlbert/Kamat:2018}, to the exterior
algebra setting. These proofs start with a nontrivially pairwise-intersecting
set family, and apply combinatorial shifting operations. If the family
becomes trivial at some step, then this guarantees strong structural
properties at the previous step. Wise choices of further combinatorial
shifting operations results in a nontrivially pairwise-intersecting
family that is of the same size and shifted.

In order to implement this strategy in the exterior algebra setting, we take limits of a parameterized family of linear maps. Draisma, Kraft, and Kuttler used similar techniques in an earlier paper \cite{Draisma/Kraft/Kuttler:2006} on the Gerstenhaber problem, concerning maximal nilpotent subspaces of a Lie algebra. In this paper, we introduce a parameterized family of linear maps acting on the exterior algebra. The limits of these actions extend combinatorial shifting to exterior algebras, and preserve more structure than those used by the third author in \cite[Section 4]{Woodroofe:2022}. These limit actions allow us to follow the general outline of the set system proof, although significant additional care is needed in the broader (non-monomial) exterior algebra situation.  

Our techniques are characteristic independent; and while they are based on ideas from algebraic geometry, our presentation here is self-contained and relatively elementary.  From the algebraic geometry perspective, a key difficulty arises from the fact that the condition of not being annihilated by any $1$-form fails to be Zariski closed.  This failure is parallel to the fact that the shift of a nontrivially pairwise-intersecting set system may become trivial.

In addition to an analogue of Theorem~\ref{thm:HM}, we also prove
an extension of another theorem of Hilton and Milner. Set families
$\mathcal{F}$ and $\mathcal{G}$ are \emph{cross-intersecting} if
for every $F\in\mathcal{F}$ and $G\in\mathcal{G}$, the intersection
$F\cap G$ is nonempty. Hilton and Milner gave bounds on the size
of $\left|\mathcal{F}\right|+\left|\mathcal{G}\right|$ under the
conditions that $\mathcal{F},\mathcal{G}$ are nonempty and cross-intersecting
(as a special case of a more general but technical result \cite[Theorem 2]{Hilton/Milner:1967}).

The exterior algebra analogue is as follows. Subsets $K$ and $L$
of $\extalg V$ are \emph{cross-annihilating} if $K\wedge L=0$, that
is, if for every $x\in K$ and $y\in L$ it holds that $x\wedge y=0$.  Thus, a subset is self-annihilating exactly if it is cross-annihilating with itself.
We show:
\begin{thm}
\label{thm:CrossAnnihilating-ext}Let $k\leq n/2$. If $K$ and $L$
are nonzero cross-annihilating subspaces of $\extalg^{k}V$, then $\dim K+\dim L\leq\binom{n}{k}-\binom{n-k}{k}+1$.
\end{thm}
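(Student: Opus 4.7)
My plan is to mirror the shifting strategy used for Theorem~\ref{thm:HMext}, applying the paper's parameterized shifting operation simultaneously to the pair $(K, L)$ and then reducing to the set-theoretic cross-intersecting bound.

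First I would apply the paper's parameterized shift $\sigma_{t}$ to both $K$ and $L$ using the same parameter. For generic $t$, $\sigma_{t}$ is invertible on $V$ and so extends to an algebra automorphism of $\extalg V$; consequently, $\sigma_{t}(K) \wedge \sigma_{t}(L) = \sigma_{t}(K \wedge L) = 0$, so cross-annihilation is preserved for generic $t$. The condition $\{(K,L) : K \wedge L = 0\}$ is closed in the appropriate product of Grassmannians, so the limit pair $(K', L')$ produced by the paper's shifting process also satisfies $K' \wedge L' = 0$. Dimensions are preserved in the limit, so $K'$ and $L'$ remain nonzero. Iterating through the sequence of elementary shifts $\slowshift{i}{j}$ (in the order used to prove Theorem~\ref{thm:HMext}), both $K'$ and $L'$ become monomial subspaces, spanned by $\{e_S : S \in \mathcal{F}\}$ and $\{e_S : S \in \mathcal{G}\}$ for set families $\mathcal{F}, \mathcal{G} \subseteq \binom{[n]}{k}$.

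In the monomial case, cross-annihilation of $K'$ and $L'$ translates directly into the cross-intersection condition on $\mathcal{F}$ and $\mathcal{G}$: since $e_S \wedge e_T = 0$ if and only if $S \cap T \neq \emptyset$, we have $K' \wedge L' = 0$ if and only if $S \cap T \neq \emptyset$ for every $S \in \mathcal{F}$ and $T \in \mathcal{G}$. Both families are nonempty since $K, L$ are nonzero. Invoking the classical set-theoretic cross-intersecting Hilton--Milner bound (originally a special case of \cite[Theorem~2]{Hilton/Milner:1967}), we conclude $|\mathcal{F}| + |\mathcal{G}| \leq \binom{n}{k} - \binom{n-k}{k} + 1$, which equals $\dim K + \dim L$.

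The main technical hurdle is the first step: verifying that the paper's shifting machinery, developed in detail for a single self-annihilating subspace, extends cleanly to simultaneous shifting of a cross-annihilating pair. Since the shift acts as an algebra automorphism of $\extalg V$ for generic parameter and the cross-annihilation condition is closed, the limit argument should carry through formally; one essentially needs only to check that the same sequence of shift parameters that produces a monomial limit for an individual subspace can be used to produce monomial limits for both members of the pair simultaneously. Given the infrastructure already in place for Theorem~\ref{thm:HMext}, this extension should require no substantively new ideas.
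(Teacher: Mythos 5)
Your proposal is correct and follows essentially the same route as the paper: slow-shift the pair $(K,L)$ simultaneously (cross-annihilation is preserved in the limit, which is exactly Lemma~\ref{lem:LimitsPreserveCrossAnn}), stabilize to monomial subspaces via Theorem~\ref{thm:ShiftingTerminates} and Corollary~\ref{cor:SlowShiftingBorelFixed}, and then invoke the Hilton--Milner cross-intersecting bound for the resulting nonempty cross-intersecting set families. The simultaneous-termination point you flag as a hurdle is handled by the same potential-function argument as in the single-subspace case, so no new ideas are needed.
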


A different bound on cross-annihilating subspaces was given by Scott and Wilmer in \cite[Theorem 2.4]{Scott/Wilmer:2021}, where they used initial monomial techniques to bound the product of $\dim K$ and $\dim L$.

We have written the paper to be accessible to an extremal combinatorialist, who we do not assume to be well-versed in algebraic geometry.  We now give a brief overview directed towards the interested algebraic geometer.  The main geometric object is a subvariety of a Grassmannian over the space of $k$-forms in the exterior algebra.   Intersecting families of sets correspond to the points (subspaces) admitting a monomial basis.  Combinatorial shifting and generalizations corresponds to limits taken over certain algebraic curves between points.  Thus, we place extremal set theory problems in a moduli space setting.  The technical difficulty in Theorem~\ref{thm:HMext} lies in the previously-mentioned fact that the ``no annihilating 1-form'' condition is not Zariski closed, and care is needed to guarantee that a curve (and its limit point) remains in the subset.
\smallskip

The rest of this article is organized as follows. In Section~\ref{sec:Preliminaries}
we give various preliminaries and background results. In Section~\ref{sec:SlowerShifting}
we introduce the linear maps whose limits will perform shifting-like operations in the exterior setting. In Section~\ref{sec:CrossAnn}
we prove Theorem~\ref{thm:CrossAnnihilating-ext}, and in Section~\ref{sec:HMproof}
we prove Theorem~\ref{thm:HMext}.

\section*{Acknowledgements}

We thank Allen Knutson and Jake Levinson for clarifying our doubts regarding limits
on the Grassmannian and  in the broader sense of algebraic geometry.

\section{\label{sec:Preliminaries}Preliminaries}

\subsection{Combinatorial shifting and shifted families of sets}
\label{subsec:CombShift}
Let $\mathcal{F}\subseteq\binom{[n]}{k}$ be a family of sets, let
$F\in\mathcal{F}$, and let $1\leq i<j\leq n$. The \emph{combinatorial
shift} operation $\shift ji$ is defined as follows.

\begin{alignat*}{1}
\shift ji(F,\mathcal{F}) & =\begin{cases}
(F\setminus\{j\})\cup\{i\} & \text{if }j\in F,i\notin F\text{ and }(F\setminus\{j\})\cup\{i\}\notin\mathcal{F},\\
F & \text{otherwise.}
\end{cases}\\[3pt]
\shift ji\mathcal{F} & =\{\shift ji(F,\mathcal{F})\ \colon\ F\in\mathcal{F}\}.
\end{alignat*}

For $I\subseteq[n]$, we say that a family $\mathcal{F}$ of subsets
of $[n]$ is \emph{shifted with respect to $I$} if $\shift ji\mathcal{F}=\mathcal{F}$
for each pair $i<j$ with $i,j\in I$. If $I=[n]$, we simply say
that $\mathcal{F}$ is \emph{shifted}. It is well-known that iteratively
applying combinatorial shift operations results in a shifted family
\cite[Section 2]{Frankl:1987}; see also Theorem~\ref{thm:ShiftingTerminates}
below.

The technique of combinatorial shifting was introduced by Erdős,
Ko, and Rado in \cite{Erdos/Ko/Rado:1961} to prove Theorem~\ref{thm:EKR}.
It has since become a standard tool in combinatorial set theory \cite{Frankl:1987}.

\subsection{Exterior algebras}

The \emph{exterior algebra} over the vector space $V$, denoted $\extalg V$, is a
graded algebra that is described as follows. For each $0\leq k\leq n$,
we define the $k$th graded component $\extalg^{k}V$ as the span
of elements $\left\{ e_{S}:S\in\binom{[n]}{k}\right\} $. Here, we
identify $\extalg^{1}V$ with $V$ by identifying $e_{\{i\}}$ with
$e_{i}$; we identify $\extalg^{0}V$ with $\ff$. The elements of
$\extalg^{k}V$ are called \emph{$k$-forms}.

Now the product is defined to satisfy the following axioms: $e_{\emptyset}\wedge e_{S}=1e_{S}=e_{S}$,
$e_{i}\wedge e_{j}=-e_{j}\wedge e_{i}$, and $e_{S}=e_{s_{1}}\wedge e_{s_{2}}\wedge\cdots\wedge e_{s_{k}}$
for $S=\left\{ s_{1}<s_{2}<\cdots<s_{k}\right\} $. The elements $e_{S}$
are \emph{monomials with respect to the basis }$\mathbf{e}$. 

The exterior algebra is an antisymmetric analogue of the polynomial
algebra, and more background may be found in most advanced algebra textbooks, such as \cite{Dummit/Foote:2004}.  It has long been used as a tool for proving results in combinatorial
set theory \cite{Babai/Frankl:1992}. 

Given a $k$-form $x$ and an understood basis, the \emph{support} of $x$ is the set of monomials with nonzero coefficient in expression for $x$ as a linear combination of monomials.  The monomial $e_S$ has \emph{variable set} $\{ e_i : i\in S \}$.  When it causes no confusion, we may identify a variable set $\{ e_i : i\in S\}$ with the underlying set of indices $S$.

Given a linear operator $M$ on $V$, we extend the linear operator
to $\extalg V$ by sending $e_{S}=e_{s_{1}}\wedge e_{s_{2}}\wedge\cdots\wedge e_{s_{k}}$
to $Me_{s_{1}}\wedge Me_{s_{2}}\wedge\cdots\wedge Me_{s_{k}}$ and
extending linearly. We notice that in particular, for any $x,y$ in
$\extalg V$, we have that $M(x\wedge y)=Mx\wedge My$.

The following result on annihilation by $1$-forms is well-known;
see e.g. \cite[Lecture 6]{Harris:1995}.
\begin{lem}
\label{lem:1formAnn} For $v\in\extalg^{1}V$, $x\in\extalg^{k}V$,
we have that $v\wedge x=0$ if and only if $x=v\wedge x'$ for some
$x'\in\extalg^{k-1}V$.
\end{lem}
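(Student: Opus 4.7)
The plan is to prove both directions directly by choosing a convenient basis. The reverse (``if'') direction is immediate from the graded-commutativity of the wedge: if $x=v\wedge x'$, then $v\wedge x=v\wedge v\wedge x'=0$, since $v\wedge v=0$ for any $1$-form $v$. So the content lies in the forward (``only if'') direction, and we may assume $v\neq 0$ (otherwise $v\wedge x=0$ holds vacuously and $x=v\wedge x'=0$ forces $x=0$, making the biconditional trivially fail in that edge case; the lemma is tacitly invoked only for nonzero $v$).

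For the forward direction, the natural move is to extend $v$ to a basis of $V$. Since $v\neq 0$, choose $f_{1}=v$ and complete to a basis $\{f_{1},f_{2},\dots,f_{n}\}$ of $V$. The associated monomials $\{f_{S}:S\subseteq[n]\}$ form a basis of $\extalg V$, and in particular $\{f_{S}:|S|=k\}$ is a basis of $\extalg^{k}V$. Write
\[
  x=\sum_{\substack{S\subseteq[n]\\ |S|=k}} c_{S}\,f_{S}.
\]
Split this sum according to whether $1\in S$ or not.

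Now I compute $v\wedge x=f_{1}\wedge x$ directly. The terms with $1\in S$ contribute $0$ because $f_{1}\wedge f_{1}=0$; the terms with $1\notin S$ give (up to sign accounted for by reordering) the linearly independent monomials $f_{\{1\}\cup S}$. Hence $v\wedge x=0$ forces $c_{S}=0$ for every $S$ with $1\notin S$. Consequently $x=\sum_{1\in S} c_{S}\,f_{S}=f_{1}\wedge\bigl(\sum_{1\notin T}c_{\{1\}\cup T}\,f_{T}\bigr)=v\wedge x'$ with $x'\in\extalg^{k-1}V$, as required.

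There is no genuine obstacle; the only step requiring care is the bookkeeping of signs when rewriting $f_{S}$ for $1\in S$ as $\pm f_{1}\wedge f_{S\setminus\{1\}}$, but this is absorbed into the coefficients and does not affect linear independence. The entire argument is characteristic-free, consistent with the remark in Notation~\ref{not:FixedNotation} that the fundamental techniques do not depend on the characteristic.
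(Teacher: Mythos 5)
Your proof is correct. The paper does not actually prove Lemma~\ref{lem:1formAnn}; it cites it as well-known (referring to Harris's \emph{Algebraic Geometry: A First Course}, Lecture~6), so there is no in-paper argument to compare against. Your basis-extension argument --- completing $v$ to a basis $f_1=v,f_2,\dots,f_n$, expanding $x$ in the induced monomial basis, and observing that the monomials $f_{\{1\}\cup S}$ with $1\notin S$ are linearly independent so that $v\wedge x=0$ kills exactly the coefficients of monomials not divisible by $f_1$ --- is the standard proof and is characteristic-free, as required. Your remark that the statement tacitly assumes $v\neq 0$ (it fails literally for $v=0$, $x\neq 0$) is also a fair and correct observation about how the lemma is used in the paper, namely only for annihilating $1$-forms arising from nontriviality considerations, where $v=0$ never occurs.
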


\subsection{\label{subsec:LimitActions}Limit actions on the Grassmannian}

Write $\pp V$ for the projective space defined on the vector space $V$, consisting of (equivalence classes of) vectors of $V\setminus\{0\}$, considered up to scalar multiplication. If $v_{1},v_{2},\dots,v_{k}$
are linearly independent vectors in $V$, then we may identify the subspace spanned by these vectors with the
point $[v_{1}\wedge v_{2}\wedge\cdots\wedge v_{k}]$ in $\pp(\extalg^{k}V)$. We get the \emph{Grassmannian}
$\grassman kV$, consisting of all points in $\pp(\extalg^{k}V)$
that may be written in this form. Thus, the Grassmannian is a geometric
object whose points correspond to $k$-dimensional subspaces of $V$.

We will need to take limits as $t\to0$ of actions of families of matrices parameterized 
by $t$. The limits we will take are basically of an algebraic geometry
nature, but we did not find accessible literature on the topic. 
For
completeness, we give an elementary description, which we hope will be accessible to any reader who has had a solid graduate course in (linear) algebra.  

We start by describing the action on an arbitrary projective space
$\pp V$. Next, extend our field $\ff$ to $\ff(t)$ by adjoining a transcendental
$t$. Thus, $\ff(t)$ consists of ratios of polynomials in $t$ with
coefficients in $\ff$. 

We now extend coefficients in our vector space $V$ over $\ff$ to
a vector space $V(t)\cong V\tensor_{\ff}\ff(t)$ over $\ff(t)$. Thus,
an element of $V(t)$ is a linear combination of $e_{1},\dots,e_{n}$
with coefficients in $\ff(t)$. By clearing denominators and/or dividing
by $t$ in $\pp V(t)$, we may write an arbitrary projective element
$v(t)$ as a combination of $e_{1},\dots,e_{n}$, where each coefficient
is a polynomial in $t$, and where the coefficients do not have a
common divisor polynomial of positive $t$-degree. This form is the \emph{canonical
representative of $v(t)$}. In particular, the (polynomial) coefficient
of at least one $e_{i}$ has a nonzero coefficient of the $t$-degree
zero term. Now the map $\eval_{0}:\pp V(t)\to\pp V$ obtained by evaluating
the polynomials in the canonical representative at $t=0$ is a well-defined
map.

Now let $N(t)$ be a nonsingular linear map over $\ff(t)$.
We define the \emph{limit} of the action of $N(t)$ on $\pp V$ to
be the composition 
\[
\pp V\xrightarrow{\iota}\pp V(t)\xrightarrow{N(t)}\pp V(t)\xrightarrow{\eval_{0}}\pp V,
\]
where $\iota$ is the standard inclusion map. We write $N$ for the
resulting map $\pp V\to\pp V$.
\begin{example}
For the limit of $N(t)$ acting on any fixed vector $v$ of $\pp\rr^{2}$, we can write $\mathop{N(t)}v = w$ in coordinates as $[w_1(t), w_2(t)]$.  This point corresponds to the formal ratio $w_1(t)/ w_2(t)$. Our description of the limit map resembles closely the usual procedure for evaluating
$\lim_{t\to0}w_{1}(t)/w_{2}(t)$ from a first calculus course. Note that here the limit point $[\alpha, 1]$ corresponds to the real number $\alpha$, while the (single) limit point $[1,0]$ corresponds to the calculus limits $\pm \infty$.
\end{example}

The action on $\pp(\extalg^{k}V)$ is now a special case: the basis
for $V$ gives a basis of monomials for $\extalg^{k}V$, and a linear
map $N(t)$ on $V(t)$ induces a linear map on $\extalg^{k}V(t)$.
Similarly for $\pp(\extalg^{r}(\extalg^{k}V))$, given a nonnegative integer $r$.

We are ready to discuss the limit of the action on $\grassman rV$ and $\grassman r{\extalg^{k}V}$. 
\begin{prop}
Let $N(t)$ be a nonsingular linear map $V(t)\to V(t)$.
If $L=v_{1}\wedge\cdots\wedge v_{r}$ is a point on $\grassman rV\subseteq\pp(\extalg^{r}V)$,
then the limit $NL$ is also in $\grassman rV$, and is the vector
subspace spanned by $\{Nw:w\in L\}$.
\end{prop}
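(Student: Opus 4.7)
The plan is to find polynomial-coefficient representatives $w_{1}(t),\ldots,w_{r}(t)\in V[t]$ that $\ff(t)$-span the subspace $U(t):=\spanop_{\ff(t)}(N(t)v_{1},\ldots,N(t)v_{r})\subseteq V(t)$ and whose values at $t=0$ are linearly independent in $V$; from such a basis, both claims will follow quickly. After scaling $N(t)$ by a common-denominator polynomial (which does not change projective classes or the $\eval_{0}$ limit), I may assume each $u_{i}(t):=N(t)v_{i}$ lies in $V[t]$. The subspace $U(t)$ has dimension $r$ because $N(t)$ is nonsingular.

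To construct the $w_{i}$'s, I would apply a greedy replacement. Set $\omega(t):=u_{1}(t)\wedge\cdots\wedge u_{r}(t)\in\extalg^{r}V[t]$. Whenever $\omega(0)=0$, the values $u_{1}(0),\ldots,u_{r}(0)$ satisfy a nontrivial linear relation $\sum_{i}\alpha_{i}u_{i}(0)=0$, so $\sum_{i}\alpha_{i}u_{i}(t)\in tV[t]$. For any $i_{0}$ with $\alpha_{i_{0}}\neq0$, replacing $u_{i_{0}}$ by $u'(t):=t^{-1}\sum_{j}\alpha_{j}u_{j}(t)\in V[t]$ preserves the $\ff(t)$-span $U(t)$, while by multilinearity together with antisymmetry of the wedge (which annihilates every term in which some $u_{j}$ with $j\neq i_{0}$ would appear twice) the new wedge equals $t^{-1}\alpha_{i_{0}}\omega$, strictly lowering the $t$-order of $\omega$. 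Since that order is a nonnegative integer, finitely many iterations terminate, producing vectors $w_{i}(t)$ with $w_{1}(0),\ldots,w_{r}(0)$ linearly independent.

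The wedge $w_{1}(t)\wedge\cdots\wedge w_{r}(t)$ is then polynomial in $t$ with nonzero value at $t=0$, and, differing from $N(t)v_{1}\wedge\cdots\wedge N(t)v_{r}$ only by a nonzero scalar in $\ff(t)$, represents the same point in $\pp(\extalg^{r}V(t))$. Its limit under $\eval_{0}$ is the decomposable form $[w_{1}(0)\wedge\cdots\wedge w_{r}(0)]$, placing $NL$ in $\grassman{r}{V}$ as the subspace $V':=\spanop_{\ff}(w_{1}(0),\ldots,w_{r}(0))$. To identify $V'$ with $\spanop\{Nw:w\in L\}$: for each $w\in L$, the vector $N(t)w$ lies in $V[t]\cap U(t)$, and after maximal $t$-division its canonical representative evaluates at $t=0$ into $V'$, giving $\spanop\{Nw\}\subseteq V'$; conversely, each $w_{i}(t)\in U(t)$ equals $N(t)$ applied to a suitable element, exhibiting $w_{i}(0)$ as an $Nw$. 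The main obstacle I anticipate is the multilinear-algebra computation verifying that the greedy step simultaneously preserves the $\ff(t)$-span of $U(t)$ and strictly decreases the $t$-order of $\omega$—the crux being the antisymmetry argument that eliminates the off-diagonal contributions and isolates the single scalar factor $t^{-1}\alpha_{i_{0}}$.
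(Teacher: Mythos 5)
Your construction of the polynomial basis $w_{1}(t),\dots,w_{r}(t)$ is correct, and it is essentially the paper's own argument in a cleaner form: the paper performs Gaussian elimination over $\ff(t)$ on the matrix with columns $N(t)v_{i}$ so that a distinguished $r\times r$ minor becomes diagonal with nonzero constant term, while you reach the same endpoint by a greedy reduction that strictly lowers the $t$-order of $\omega=u_{1}\wedge\cdots\wedge u_{r}$; the multilinear computation you flag as the crux works exactly as you describe. This correctly establishes that $NL$ lies in $\grassman rV$ and equals $V'=\spanop(w_{1}(0),\dots,w_{r}(0))$. Your forward inclusion $\spanop\{Nw:w\in L\}\subseteq V'$ is also fine once you add the (standard, but worth stating) observation that the coordinates of a polynomial vector of $U(t)$ in the basis $w_{1}(t),\dots,w_{r}(t)$ are regular at $t=0$, by Cramer's rule applied to an $r\times r$ submatrix whose determinant does not vanish at $t=0$.

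The gap is in your final step. The ``suitable element'' whose $N(t)$-image is $w_{i}(t)$ is $N(t)^{-1}w_{i}(t)$, which lies in $L\otimes_{\ff}\ff(t)$ but in general not in $L$; so $w_{i}(0)$ is the limit of $N(t)$ applied to a $t$-varying element of $L$, not an $Nw$ with $w\in L$ as the statement literally requires. This cannot be repaired for arbitrary nonsingular $N(t)$: take $n=r=2$, $L=V$, and $N(t)$ with $N(t)e_{1}=e_{1}+te_{2}$ and $N(t)e_{2}=(1+t)e_{1}+te_{2}$ (determinant $-t^{2}$). For every nonzero $w=\mu_{1}e_{1}+\mu_{2}e_{2}$ one computes $N(t)w=(\mu_{1}+\mu_{2}+\mu_{2}t)e_{1}+(\mu_{1}+\mu_{2})te_{2}$, whose limit is $[e_{1}]$ whether or not $\mu_{1}+\mu_{2}=0$; hence $\spanop\{Nw:w\in L\}$ is the line spanned by $e_{1}$, while $N(t)(e_{1}\wedge e_{2})=-t^{2}\,e_{1}\wedge e_{2}$ gives $NL=[e_{1}\wedge e_{2}]$, the whole plane. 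In fairness, the paper's proof shares this feature---its Gaussian elimination likewise produces vectors $N(t)w_{i}$ with $w_{i}\in L\otimes\ff(t)$---so the second assertion should really be read as describing the span of limits of $N(t)w(t)$ over $w(t)\in L\otimes\ff(t)$ (equivalently: $NL$ is decomposable and contains every $Nw$ with $w\in L$); for the specific maps $N_{j\to i}(t)$ and $M_{i}(t)$ actually used later, where the degeneration is only first order and transversal, the stronger pointwise statement does hold. You should at minimum flag that your exhibited preimages live in $L\otimes\ff(t)$ rather than $L$.
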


\begin{proof}
We first notice that a vector $w$ is in $L$ if and only if $w\wedge L=0$.
Now $\mathop{N(t)}(w\wedge L)=\mathop{N(t)}w\wedge \mathop{N(t)}L=0$, and so the same happens
in $t$-degree zero in the canonical representative. It follows that
if $w\wedge L=0$, then $Nw\wedge NL=0$.

Conversely, we need to show that $NL$ is annihilated by $r$ linearly
independent vectors in the image of $N$. Suppose that $e_{i_{1}}\wedge\cdots\wedge e_{i_{r}}$
is in the support of $NL$. Then $e_{i_{1}}\wedge\cdots\wedge e_{i_{r}}$
has a $t$-degree zero component in the canonical representative of
$\mathop{N(t)}L$. The coefficient of this monomial is an $r\times r$ minor
of the the matrix with columns $\mathop{N(t)}v_{1},\dots,\mathop{N(t)}v_{r}$. By Gaussian
elimination, we may find vectors $\mathop{N(t)}w_{1},\dots,\mathop{N(t)}w_{r}$ in $\mathop{N(t)}L$
so that the corresponding minor of $\mathop{N(t)}w_{1},\dots,\mathop{N(t)}w_{r}$ is
diagonal and has a $t$-degree zero component. It follows that $Nw_{1},\dots,Nw_{r}$
are linearly independent, as desired.
\end{proof}
\begin{rem}
Equivalently, if the matrix $W$ with columns $\mathop{N(t)}v_{1},\dots,\mathop{N(t)}v_{r}$
has an $r\times r$ minor with a $t$-degree zero component, then
let $G$ be the corresponding $r\times r$ submatrix. Now the columns
of $WG^{-1}$ are vectors in $\mathop{N(t)}L$ whose limits are linearly independent.
\end{rem}

\begin{cor}
Let $N(t)$ be a nonsingular linear map $V(t)\to V(t)$.
If $L_{0}\subseteq L$ are vector subspaces of $V$, then also $NL_{0}\subseteq NL$.
\end{cor}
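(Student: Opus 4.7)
The plan is to reduce the corollary directly to the preceding proposition, which characterizes the limit subspace $NL$ as the span of $\{Nw : w \in L\}$, where for each vector $w$, the point $Nw$ is computed by the single-point limit procedure $\pp V \xrightarrow{\iota} \pp V(t) \xrightarrow{N(t)} \pp V(t) \xrightarrow{\eval_0} \pp V$. The key observation is that for an individual vector $w \in V$, the image $Nw$ depends only on $w$ and the family $N(t)$, not on which ambient subspace we consider $w$ to sit inside.

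First I would apply the proposition to $L$ to obtain $NL = \spanop\{Nw : w \in L\}$, and then apply the proposition to $L_0$ to obtain $NL_0 = \spanop\{Nw : w \in L_0\}$. Since $L_0 \subseteq L$, every $w \in L_0$ is in particular an element of $L$, so $Nw \in NL$. This gives the inclusion of generating sets $\{Nw : w \in L_0\} \subseteq NL$. Because $NL$ is itself a vector subspace, the span of this set is contained in $NL$, which yields $NL_0 \subseteq NL$.

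I do not foresee any real obstacle; the work was already done in the proposition, and the corollary is essentially bookkeeping once one has observed that the pointwise description of $Nw$ is intrinsic to the vector $w$. The only minor point worth flagging is to pick a fixed choice of representative vector for each projective point $Nw$ so that the two applications of the proposition use mutually consistent spanning sets, but this is automatic from the canonical representative construction described earlier.
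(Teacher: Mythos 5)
Your argument is correct and is precisely the derivation the paper intends: the corollary is stated without proof as an immediate consequence of the preceding proposition, and your reduction --- apply the proposition to both $L_0$ and $L$, note that the pointwise limit $Nw$ depends only on $w$, and conclude by inclusion of spanning sets --- is exactly that implicit argument. Your remark that the description of $Nw$ is intrinsic to the vector $w$ (independent of the ambient subspace) is the one point genuinely worth making explicit, and you make it.
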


As before, the limit action $N$ on $\grassman r{\extalg^{k}V}$ is
a special case.

For the purpose of this paper, we need that the limit preserves the self-annihilating and cross-annihilating
properties. Experts in algebraic geometry will recognize this as following
from the fact that these properties correspond to closed sets in the
Zariski topology. An elementary proof is also easy.
\begin{lem}
\label{lem:LimitsPreserveCrossAnn}Let $N(t)$ be a nonsingular
linear map $V(t)\to V(t)$. If $L$ and $L'$ are cross-annihilating
subspaces of $\extalg V$, then $NL$ and $NL'$ are also cross-annihilating.
\end{lem}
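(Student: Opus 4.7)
The plan is to reduce the claim to individual wedge products $Nw \wedge Nw'$ for $w \in L$ and $w' \in L'$, using the proposition that identifies $NL$ with the $\ff$-span of $\{Nw : w \in L\}$ and similarly for $NL'$. By bilinearity of the wedge product it then suffices to verify $Nw \wedge Nw' = 0$ for every such pair.

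First I would start from the cross-annihilation identity $w \wedge w' = 0$ and apply $N(t)$. Because the induced action of $N(t)$ on the exterior algebra satisfies $N(t)(x \wedge y) = N(t)x \wedge N(t)y$, this yields $N(t)w \wedge N(t)w' = 0$ in $\extalg^{2k} V(t)$. Second, I would pass from this $\ff(t)$-identity to one at $t = 0$. Let $\tilde w(t)$ and $\tilde w'(t)$ denote the canonical representatives of $N(t)w$ and $N(t)w'$, so that $N(t)w = p(t)\tilde w(t)$ and $N(t)w' = q(t)\tilde w'(t)$ for some nonzero $p(t), q(t) \in \ff(t)$. Since $\ff(t)$ is a field, the vanishing above forces $\tilde w(t) \wedge \tilde w'(t) = 0$. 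Because the canonical representatives have polynomial (not merely rational) coefficients in the monomial basis, each coefficient of a $2k$-monomial on the left-hand side is itself a polynomial in $t$ that is identically zero, hence vanishes at $t = 0$. Evaluating yields
\[
Nw \wedge Nw' \;=\; \eval_0(\tilde w(t)) \wedge \eval_0(\tilde w'(t)) \;=\; 0,
\]
as required. Combined with the bilinearity step, this gives $x \wedge y = 0$ for all $x \in NL$ and $y \in NL'$.

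The main obstacle---and the only subtle point---is the transfer in the second step from vanishing over $\ff(t)$ to vanishing at $t = 0$. One might try to write ``$N(t)w \wedge N(t)w' = 0$, so passing to the limit gives $Nw \wedge Nw' = 0$,'' but the limit on projective space is defined only after normalizing to a canonical form, so it is not a priori clear that the wedge of two limits is the limit of the wedge. The canonical-representative trick sidesteps this: once the coefficients are polynomials rather than rational functions, the vanishing of the wedge product becomes an identity of polynomials in $t$, and such an identity specializes cleanly to any point, including $t = 0$.
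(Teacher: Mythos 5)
Your proof is correct and follows essentially the same route as the paper: both arguments apply $N(t)$ to the identity $w\wedge w'=0$ using multiplicativity, pass to canonical representatives, and observe that the resulting polynomial identity specializes at $t=0$ to give $Nw\wedge Nw'=0$. Your write-up is slightly more explicit about the scalar factors relating $N(t)w$ to its canonical representative and about the bilinearity reduction via the proposition describing $NL$ as the span of $\{Nw: w\in L\}$, but the substance is identical.
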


\begin{proof}
Let $x$ be in $L$ and $x'$ in $L'$. The $t$-degree zero term
of the exterior product of the canonical representatives of $\mathop{N(t)}x$
and $\mathop{N(t)}x'$ is the product of the $t$-degree zero terms of $\mathop{N(t)}x$
and $\mathop{N(t)}x'$. Now if $x\wedge x'=0$, so that also $\mathop{N(t)}x\wedge \mathop{N(t)}x'=0$,
then we recover that $Nx\wedge Nx'=0$.
\end{proof}

For further reading on projective space and the Grassmannian, we refer the reader to \cite{Artin:2022,Harris:1995}, for example.  Accessible literature on limits in algebraic geometry (also known as \emph{degenerations} or \emph{specializations}) seems to be a bit difficult to find.  Artin \cite[Chapter 5]{Artin:2022} discusses limits of curves in algebraic geometry briefly, mainly over the field of complex numbers.  The description we have given is known (in a somewhat more abstract setting) as the \emph{valuative criterion}, as discussed by Newstead in \cite[Preliminaries]{Newstead:1978}.  Eisenbud and Harris \cite[II.3]{Eisenbud/Harris:2000} also give a somewhat similar description to ours in a more abstract and general situation.

\section{\label{sec:SlowerShifting}Slower shifting}

In this section, we describe a family of linear operators and their limits that generalize combinatorial shifting.  The operators are similar to but distinct from  those given by the third author in \cite[Section 4.1]{Woodroofe:2022}, and earlier by Knutson in \cite{Knutson:2014UNP}.  Murai and Hibi had previously examined these operators in \cite[Section 2]{Murai/Hibi:2009} (see also \cite[Section 11.3.2]{Herzog/Hibi:2011}) applied to exterior ideals with a basis of monomials.  
In the case where a subspace of the exterior algebra does not admit a basis of monomials,
our matrices will preserve more structure in comparison to those of \cite{Knutson:2014UNP,Woodroofe:2022}, yielding a slower and gentler shifting operation.

Let $i$ and $j$ be between $1$ and $n$. As before, let $\mathbb{F}(t)$
be the field extension with a new transcendental element $t$, and
let $V(t)\cong V\tensor\ff(t)$ be the extension of $V$ to coefficients
in $\ff(t)$. 

\begin{defn}    
Let $N_{j\to i}(t)$ be the linear map sending $e_{j}\mapsto e_{i}+te_{j}$,
and fixing all other basis elements. 
The limit of the action of $N_{j\to i}(t)$ is the \emph{slow shifting} operation.


\end{defn}

Also of some interest is the linear map $M_{i}(t)$, sending $e_{i}\mapsto te_{i}$
and fixing all other $e_{j}$. Obviously, both $N_{j \to i}(t)$ and $M_{i}(t)$
depend on the choice of (ordered) basis $\mathbf{e}$.  In particular, as matrices acting on the left, we have 
\[ \mathop{N_{j\to i}(t)}=\bordermatrix{       &   & \scriptstyle{i} &   & \scriptstyle{j} & \cr       & 1 &   &   &   & \cr     \scriptstyle{i} &   & 1 &   & 1 & \cr       &   &   & \ddots   &   &\cr     \scriptstyle{j} &   &   &   & t & \cr       &   &   &   &   & 1 \cr},  \qquad M_i(t) =    \bordermatrix{      &   &        &\scriptstyle{i}   &        &   \cr      & 1 &        &    &        &   \cr      &   & \ddots &    &        &   \cr     \scriptstyle{i}&   &        &t   &        &   \cr      &   &        &    & \ddots &   \cr      &   &        &    &        & 1 \cr}. \]
As in Section~\ref{subsec:LimitActions}, we may extend an action
on $V(t)$ to the Grassmannian, and consider the limit action obtained
by evaluating the canonical representative at $t=0$. Denote the limit
of $N_{j\to i}(t)$ as $\slowshift ji$, and that of $M_{i}(t)$ as $M_{i}$.

\subsection{The action of \texorpdfstring{$\slowshift ji$}{Nij} }
\label{subsec:SlowShift}
We first describe the action of $\slowshift ji$ on (the projectivization
of) elements of $V$.  As in Notation~\ref{not:FixedNotation}, we denote by $V^{(j)}$ the vector subspace of $V$ spanned
by $\mathbf{e}\setminus\{e_{j}\}$.  Let $v=x+e_{j}$ be in $\pp V$, where
$x$ is in $V^{(j)}$.  Then $\mathop{N_{j\to i}(t)}v=x+e_{i}+te_{j}$. This
yields as canonical representative 
\[
\mathop{N_{j\to i}(t)}v=\begin{cases}
x+e_{i}+te_{j} & \text{if }e_{i}+x\neq0,\\
e_{j} & \text{otherwise.}
\end{cases}\text{\ensuremath{\qquad}Thus, }\slowshift ji v=\begin{cases}
x+e_{i} & \text{if }e_{i}+x\neq0,\\
e_{j} & \text{otherwise.}
\end{cases}
\]
We now extend this action to $\mathbb{P}(\extalg^{k}V)$. Here, if
$m=x+e_{j}\wedge y$, where the supports of $x$ and $y$ do not contain
$e_{j}$ in their variable sets, then $\mathop{N_{j\to i}(t)}m=x+e_{i}\wedge y+te_{j}\wedge y$. Similarly
to the above, this yields as canonical representative 
\begin{align}
\mathop{N_{j\to i}(t)}m & =\begin{cases}
x+e_{i}\wedge y+te_{j}\wedge y & \text{if }e_{i}\wedge y+x\neq0,\\
e_{j}\wedge y & \text{otherwise; and}
\end{cases}\label{eq:SlowShiftExtalg}\\
\slowshift ji m & =\begin{cases}
x+e_{i}\wedge y & \text{if }e_{i}\wedge y+x\neq0,\\
e_{j}\wedge y & \text{otherwise.}
\end{cases}\nonumber 
\end{align}
 We observe that when $m$ is a monomial with variable set
$F$, then $\slowshift ji m$ is the monomial with variable set $\shift{j}{i}F$.
More generally, if $L$ is generated by monomials whose variable sets form
the set system $\mathcal{F}$, then $\slowshift ji$ is generated by
the monomials with variable sets $\shift ji\mathcal{F}$. The latter statement
follows quickly from Lemma~\ref{lem:SlowShiftMonomialAtj} below,
or a direct proof with bases is also not difficult.

\subsection{Comparison with other operations}
\label{subsec:CompareOps}
Our operation $\slowshift ji$ slowly changes $e_{j}$'s to $e_{i}$'s
in an exterior $k$-form, while leaving alone monomials whose variable
sets avoid both. In contrast, the shifting operations considered in
\cite{Knutson:2014UNP,Woodroofe:2022} may quickly replace non-trivial systems with
trivial systems. Algebraic shifting \cite{Kalai:2002} or the related techniques
with initial monomials in \cite{Scott/Wilmer:2021} have similar defects.

Indeed, one essential ingredient of our proof is Lemma~\ref{lem:Technical2div} below,
which says that if the $1$-form $\ell$ annihilates a subspace of
$k$-forms after applying $\slowshift ji$, then the $2$-form $\ell\wedge(e_{i}-e_{j})$
annihilates before shifting. This is an exterior algebra analogue
of the fact that if $\shift{j}{i}\mathcal{F}$ becomes trivial, then before
shifting, every set in $\mathcal{F}$ contains at least one of $i$
or $j$. The analogue fails for the operations of \cite{Kalai:2002,Knutson:2014UNP,Scott/Wilmer:2021,Woodroofe:2022}.

To illustrate, we rephrase the operation of \cite{Knutson:2014UNP,Woodroofe:2022}
in the language of the current paper: it is the limit $O_{j\to i}$
as $t\to0$ of the linear map $O_{j\to i}(t)$ sending $e_{j}\mapsto e_{i}+te_{j}$
and $e_{h}\mapsto te_{h}$ for $h\neq j$. We consider the
$k$-form $z=e_{1}\wedge x+e_{2}\wedge y+w$, where $x$, $y$, and
$w$ are in $\extalg V^{(\{1,2\})}$. For $n$ large
enough relative to $k$, one can choose $x,y,w$ so that $z$ is not annihilated by any $2$-form.
Now $\slowshift 21 z$ is $e_{1}\wedge x+e_{1}\wedge y+w$. Depending on
the choice of $w$, this may not even be annihilated by any $2$-form, let alone a $1$-form.
In contrast, it is straightforward to see that $\mathop{O_{2\to1}}z$ is $e_{1}\wedge y$, which is annihilated
by $e_{1}$. We see that the analogue of Lemma~\ref{lem:Technical2div} for $O_{j\to i}$
does not hold, even for $1$-dimensional subspaces!

Similar drawbacks hold for initial monomial techniques, which replace $z$ with a monomial in a single step. 

\subsection{The action of \texorpdfstring{$M_{i}$}{Mi}}
\label{subsec:MinimizeVarAction}
Although it will be of less importance to us, it is not difficult
to describe the action of $M_{i}$. If $v=x+e_{i}$ is in $\pp V$
for $x\in V^{(i)}$, then 
\[
\mathop{M_{i}(t)}v=\begin{cases}
x+te_{i} & \text{if }x\neq0,\\
e_{i} & \text{otherwise.}
\end{cases}\text{\ensuremath{\qquad}Thus, }\mathop{M_{i}}v=\begin{cases}
x & \text{if }x\neq0,\\
e_{i} & \text{otherwise.}
\end{cases}
\]
 Extending this action to $\pp(\extalg^{k}V)$, we get for $m=x+e_{i}\wedge y$
(where the supports of $x$ and $y$ do not contain $e_{i}$ in their variable sets) the
following.

\[
\mathop{M_{i}(t)}m=\begin{cases}
x+te_{i}\wedge y & \text{if }x\neq0,\\
e_{i}\wedge y & \text{otherwise}.
\end{cases}\text{\ensuremath{\qquad}}\mathop{M_{i}}m=\begin{cases}
x & \text{if }x\neq0,\\
e_{i}\wedge y & \text{otherwise}.
\end{cases}
\]
 
\subsection{Stabilization of slow shifting}

We now show that a subspace $L$ of $\extalg^{k}V$ stabilizes under
repeated actions of our slow shifting $\slowshift ji$ operations.

A limit action of $\slowshift ji$ on $L$ is said to be \emph{fixing}
if $\slowshift ji L = L$; we will mainly be interested in non-fixing actions. Consider the following procedure.
\begin{lyxalgorithm}[Slow shifting procedure]
\label{alg:ShiftingAlg} Given $L$ a subspace of $\extalg^{k}V$,
and $I\subseteq[n]$:

While there exists a pair $i<j$ chosen from $I$ so that $\slowshift ji L\neq L$,

$\qquad$set $L:=\slowshift ji L$.

Return L.
\end{lyxalgorithm}

We will require some lemmas. We consider $\extalg V^{(j)}$
as a subalgebra of $\extalg V$. We say that $L$ is \emph{monomial
with respect to $e_{j}$} if 
\[
L=\left(L\cap\extalg^{k}V^{(j)}\right)\oplus\left(L\cap\big(e_{j}\wedge\extalg^{k-1}V^{(j)}\big)\right).
\]
That is, $L$ is monomial with respect to $e_{j}$ if we can find
a basis of elements that are either multiples of $e_{j}$, or else
do not have any monomials with $e_{j}$ in the support.  We also call such a basis \emph{monomial with respect to $e_{j}$}.
\begin{lem}
\label{lem:SlowShiftMakesMonomial}If $1\leq i<j\leq n$, then $\slowshift ji L$
is monomial with respect to $e_{j}$.
\end{lem}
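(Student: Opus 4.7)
The plan is to combine two pieces already in place: the proposition from Section~\ref{subsec:LimitActions} expressing a Grassmannian limit as the span of individual point-wise limits, and the explicit formula for $\slowshift ji$ on a single $k$-form given in equation~\eqref{eq:SlowShiftExtalg}. Applying the proposition with the ambient vector space taken to be $\extalg^{k}V$, we immediately get that $\slowshift ji L$ is the vector subspace spanned by $\{\slowshift ji w : w\in L\}$.

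Next, every $w\in L\subseteq \extalg^{k}V$ admits a unique decomposition $w=x+e_{j}\wedge y$ with $x\in\extalg^{k}V^{(j)}$ and $y\in\extalg^{k-1}V^{(j)}$. By equation~\eqref{eq:SlowShiftExtalg}, the point-wise limit $\slowshift ji w$ equals either $x+e_{i}\wedge y$, which lies in $\extalg^{k}V^{(j)}$, or $e_{j}\wedge y$, which lies in $e_{j}\wedge\extalg^{k-1}V^{(j)}$. Thus every element of the spanning set $\{\slowshift ji w : w\in L\}$ sits in one of these two subspaces.

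From here the conclusion is immediate: the span of those spanning elements landing in $\extalg^{k}V^{(j)}$ is contained in $\slowshift ji L\cap\extalg^{k}V^{(j)}$, the span of those landing in $e_{j}\wedge\extalg^{k-1}V^{(j)}$ is contained in $\slowshift ji L\cap(e_{j}\wedge\extalg^{k-1}V^{(j)})$, and together they exhaust $\slowshift ji L$. Since $\extalg^{k}V^{(j)}\cap(e_{j}\wedge\extalg^{k-1}V^{(j)})=0$, the resulting sum is direct, giving exactly the monomial-with-respect-to-$e_{j}$ property.

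The main obstacle is essentially rhetorical: one has to recognize that the Grassmannian machinery of Section~\ref{subsec:LimitActions} reduces the lemma to the observation that each of the two cases in equation~\eqref{eq:SlowShiftExtalg} respects the direct-sum decomposition $\extalg^{k}V=\extalg^{k}V^{(j)}\oplus(e_{j}\wedge\extalg^{k-1}V^{(j)})$. If one preferred to avoid invoking the proposition here, an alternative is to pick a basis of $L$ adapted to the filtration $L\supseteq L\cap\extalg^{k}V^{(j)}$, apply $\mathop{N_{j\to i}(t)}$ to each basis vector, and then row-reduce over $\ff$ to isolate those linear combinations whose constant term in $t$ lies in $L\cap\extalg^{k}V^{(j)}$; dividing such combinations by $t$ produces the promised multiples of $e_{j}$.
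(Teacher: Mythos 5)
Your proof is correct and follows the same route as the paper, which simply declares the lemma ``immediate'' from equation~(\ref{eq:SlowShiftExtalg}); you have filled in exactly the details that make it immediate, namely that the limit subspace is spanned by the pointwise limits and that each pointwise limit lands in one of the two complementary summands $\extalg^{k}V^{(j)}$ and $e_{j}\wedge\extalg^{k-1}V^{(j)}$. No gaps.
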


\begin{proof}
Immediate by (\ref{eq:SlowShiftExtalg}).
\end{proof}

If $L$ is monomial with respect to $e_{h}$, then the slow
shift $\slowshift ji$ distributes over this monomiality for $h\neq i,j$, as follows.

\begin{lem}
\label{lem:SlowShiftMonomialAwayfromj}Let $L$ be a subspace of $\extalg^{k}V$,
and let $h\in[n]$. If $L$ is monomial with respect to $e_{h}$,
and $1\leq i<j\leq n$ are distinct from $h$, then 
\begin{align*}
\slowshift jiL & =\slowshift ji\left(L\cap\extalg^{k}V^{(h)}\right)\oplus \slowshift ji\left(L\cap\big(e_{h}\wedge\extalg^{k-1}V^{(h)}\big)\right)\\
 & =\left(\slowshift ji L\cap\extalg^{k}V^{(h)}\right)\oplus\left(\slowshift ji L\cap\big(e_{h}\wedge\extalg^{k-1}V^{(h)}\big)\right).
\end{align*}
\end{lem}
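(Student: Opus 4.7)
The plan is to push the decomposition of $L$ coming from monomiality with respect to $e_h$ through the limit $\slowshift{j}{i}$. Write $L_0 = L \cap \extalg^k V^{(h)}$ and $L_1 = L \cap (e_h \wedge \extalg^{k-1} V^{(h)})$, so that $L = L_0 \oplus L_1$ by hypothesis. The key observation is that, because $h$ differs from both $i$ and $j$, the matrix $N_{j\to i}(t)$ fixes $e_h$ and only modifies coordinates that do not involve $e_h$; consequently it carries each of the $\ff(t)$-subspaces $\extalg^k V^{(h)}(t)$ and $e_h \wedge \extalg^{k-1} V^{(h)}(t)$ into itself.

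The first substantive step is to show the containments $\slowshift{j}{i} L_0 \subseteq \extalg^k V^{(h)}$ and $\slowshift{j}{i} L_1 \subseteq e_h \wedge \extalg^{k-1} V^{(h)}$. For any $v \in L_0$, the form $\mathop{N_{j\to i}(t)} v$ lies in $\extalg^k V^{(h)}(t)$, so none of its monomials involve $e_h$; clearing denominators (and dividing by $t$) to obtain the canonical representative preserves this property, and then evaluating at $t=0$ yields an element of $\extalg^k V^{(h)}$. Since the proposition of Section~\ref{subsec:LimitActions} describes $\slowshift{j}{i} L_0$ as the span of such individual limits, the first containment follows. The second is identical once one notes that $\mathop{N_{j\to i}(t)}(e_h \wedge w) = e_h \wedge \mathop{N_{j\to i}(t)} w$, so every element of $\mathop{N_{j\to i}(t)} L_1$ remains divisible by $e_h$.

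For the two equalities themselves, the corollary on inclusions of subspaces yields $\slowshift{j}{i} L_0, \slowshift{j}{i} L_1 \subseteq \slowshift{j}{i} L$, and the containments above force $\slowshift{j}{i} L_0 \cap \slowshift{j}{i} L_1 = 0$. Dimension counting closes both equalities at once: limits on the Grassmannian preserve dimension, so $\dim \slowshift{j}{i} L_0 + \dim \slowshift{j}{i} L_1 = \dim L_0 + \dim L_1 = \dim L = \dim \slowshift{j}{i} L$, yielding the first equality. The second follows by applying the same count to the inclusions $\slowshift{j}{i} L_0 \subseteq \slowshift{j}{i} L \cap \extalg^k V^{(h)}$ and $\slowshift{j}{i} L_1 \subseteq \slowshift{j}{i} L \cap (e_h \wedge \extalg^{k-1} V^{(h)})$, whose right-hand sides also intersect trivially and sit inside $\slowshift{j}{i} L$. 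The only place where any real care is required is in transferring the ``avoids $e_h$'' and ``divisible by $e_h$'' properties through the limit; this is handled by the explicit canonical-representative description and should not require any further Grassmannian machinery.
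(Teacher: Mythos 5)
Your proof is correct and follows essentially the same route as the paper's: establish that $\slowshift ji$ carries $L\cap\extalg^{k}V^{(h)}$ into $\extalg^{k}V^{(h)}$ and $L\cap(e_{h}\wedge\extalg^{k-1}V^{(h)})$ into $e_{h}\wedge\extalg^{k-1}V^{(h)}$, then conclude by counting dimensions. You simply spell out via the canonical-representative description what the paper dismisses as immediate from its explicit formula for the slow shift.
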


\begin{proof}
It is immediate by (\ref{eq:SlowShiftExtalg}) that 
\[
\slowshift ji \left(L\cap\extalg^{k}V^{(h)}\right)\subseteq\extalg^{k}V^{(h)}\quad\text{and}\quad \slowshift ji \left(L\cap\big(e_{h}\wedge\extalg^{k-1}V^{(h)}\big)\right)\subseteq e_{h}\wedge\extalg^{k-1}V^{(h)}.
\]
The proof now follows by counting dimensions.
\end{proof}

If $L$ is monomial with respect to $e_{j}$, then the slow
shift $\slowshift ji$ respects the monomiality in a weaker manner.
\begin{lem}
\label{lem:SlowShiftMonomialAtj}Let $L$ be a subspace of $\extalg^{k}V$
and $j\leq n$.  Let $K$ be the largest subspace of $\extalg^{k-1}V^{(j)}$ such that $e_j \wedge K \subseteq L$. If $L$ is monomial with respect to $e_{j}$, 
and if $1\leq i<j$, then 
\[
\slowshift ji L=\left(\big(L\cap\extalg^{k}V^{(j)}\big)+\left(e_{i}\wedge K\right)\right)\oplus\left(e_{j}\wedge K'\right),
\]
 where $K'$ is the largest subspace of $K$ such that $e_{i}\wedge K'\subseteq L$.
\end{lem}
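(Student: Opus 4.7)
The plan is to combine the characterization from the proposition in Section~\ref{subsec:LimitActions}, which tells us that $\slowshift{j}{i} L$ equals the span of $\{\slowshift{j}{i} w : w \in L\}$, with the explicit formula~(\ref{eq:SlowShiftExtalg}) for the action of $\slowshift{j}{i}$ on individual elements of $\extalg^{k}V$. Set $L_{0} := L \cap \extalg^{k}V^{(j)}$. Since $L$ is monomial with respect to $e_{j}$, every $w \in L$ decomposes uniquely as $w = w_{0} + e_{j}\wedge y$ with $w_{0} \in L_{0}$ and $y \in K$, by definition of $K$.

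I would then apply formula~(\ref{eq:SlowShiftExtalg}) to each such $w$, obtaining
\[
\slowshift{j}{i} w = \begin{cases} w_{0} + e_{i}\wedge y & \text{if } w_{0} + e_{i}\wedge y \neq 0,\\ e_{j}\wedge y & \text{otherwise.} \end{cases}
\]
The next step is to identify when the ``otherwise'' branch occurs: the equality $w_{0} + e_{i}\wedge y = 0$ forces $w_{0} = -e_{i}\wedge y$, and because $i \neq j$ we have $e_{i}\wedge y \in \extalg^{k}V^{(j)}$, so this is equivalent to $e_{i}\wedge y \in L_{0}$. By the defining property of $K'$, this holds precisely when $y \in K'$.

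Taking spans, the first branch produces, as $w_{0}$ ranges over $L_{0}$ and $y$ over $K$, all nonzero elements of $L_{0} + e_{i}\wedge K$, and so spans $L_{0} + e_{i}\wedge K$. The second branch produces $\{e_{j}\wedge y : y \in K'\}$, spanning $e_{j}\wedge K'$. Hence $\slowshift{j}{i} L = \bigl((L\cap\extalg^{k}V^{(j)}) + e_{i}\wedge K\bigr) + e_{j}\wedge K'$. To see the sum is direct, I would invoke the decomposition $\extalg^{k}V = \extalg^{k}V^{(j)} \oplus (e_{j}\wedge\extalg^{k-1}V^{(j)})$: since $i \neq j$, the first summand lies in $\extalg^{k}V^{(j)}$, while $e_{j}\wedge K'$ lies in $e_{j}\wedge\extalg^{k-1}V^{(j)}$, and these subspaces intersect trivially.

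The main obstacle is essentially bookkeeping — correctly recognising when the piecewise formula enters its ``otherwise'' branch and translating this into the intrinsic condition $y \in K'$. Once that translation is made, both the identification of the two summands and the direct-sum argument follow immediately from the $e_{j}$-grading.
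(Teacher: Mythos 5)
Your proposal is correct and takes essentially the same approach as the paper: both apply the elementwise formula~(\ref{eq:SlowShiftExtalg}) to the decomposition $L=(L\cap\extalg^{k}V^{(j)})\oplus(e_{j}\wedge K)$ and identify when the degenerate branch occurs as the condition $e_{i}\wedge y\in L$, i.e.\ $y\in K'$. The only cosmetic difference is that you compute the span of the image set directly and get directness from the $e_{j}$-grading, whereas the paper exhibits the containment $\supseteq$ via specific elements (e.g.\ $e_{j}\wedge y-e_{i}\wedge y$ for $y\in K'$) and concludes by counting dimensions.
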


\begin{proof}
It is immediate that $L\cap\extalg^{k}V^{(j)}$ is fixed under $\slowshift ji$,
while from definition
\[
\slowshift ji \left(e_{j}\wedge K\right)=\left(e_{i}\wedge K\right)\oplus\left(e_{j}\wedge\left\langle x\in K:e_{i}\wedge x=0\right\rangle \right).
\]
More broadly, for each $y\in K$ so that $e_{i}\wedge y\in L$, we
have $\slowshift ji \left(e_{j}\wedge y-e_{i}\wedge y\right)=e_{j}\wedge y$.
The proof now follows by counting dimensions.
\end{proof}
\begin{cor}
If $L$ is monomial with respect to $e_{j}$, then for each $e_{j}\wedge x$
in $\slowshift ji L$, it also holds that $e_{i}\wedge x$ is in $\slowshift ji L$. 
\end{cor}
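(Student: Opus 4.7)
The plan is to read off the corollary directly from the explicit decomposition of $\slowshift ji L$ provided by Lemma~\ref{lem:SlowShiftMonomialAtj}.  By Lemma~\ref{lem:SlowShiftMakesMonomial}, the subspace $\slowshift ji L$ is itself monomial with respect to $e_j$, so any element of $\slowshift ji L$ of the form $e_j\wedge x$ may be represented with $x\in\extalg^{k-1}V^{(j)}$.  I would fix such a representative throughout.

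Next, comparing with the decomposition
\[
\slowshift ji L=\bigl((L\cap\extalg^{k}V^{(j)})+(e_{i}\wedge K)\bigr)\oplus(e_{j}\wedge K'),
\]
from Lemma~\ref{lem:SlowShiftMonomialAtj}, the first summand lies entirely in $\extalg^{k}V^{(j)}$, whereas the second summand lies in $e_{j}\wedge\extalg^{k-1}V^{(j)}$.  This is precisely the monomial-in-$e_j$ splitting of $\slowshift ji L$, so the element $e_j\wedge x$ from the hypothesis must sit inside the second summand $e_j\wedge K'$.  Since multiplication by $e_j$ is injective on $\extalg V^{(j)}$, this forces $x\in K'\subseteq K$.

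Finally, the inclusion $e_i\wedge K\subseteq \slowshift ji L$ is already exhibited by the first summand of the same decomposition, so $e_i\wedge x\in e_i\wedge K\subseteq \slowshift ji L$, which is the desired conclusion.  I do not anticipate any real obstacle: once Lemma~\ref{lem:SlowShiftMonomialAtj} is available, the corollary is essentially bookkeeping about which summand each piece lives in.  The only mild care needed is to use monomiality with respect to $e_j$ in order to take the representative $x$ inside $\extalg V^{(j)}$; without this reduction one would have to worry about ambiguity in $x$, but with it, the matching between $K'$ and $K$ goes through cleanly.
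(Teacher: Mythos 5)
Your proposal is correct and matches the paper's intent: the corollary is stated without proof immediately after Lemma~\ref{lem:SlowShiftMonomialAtj} precisely because it is the bookkeeping you describe — the two summands of that decomposition are the monomial-in-$e_j$ splitting of $\slowshift ji L$, so $e_j\wedge x$ forces $x\in K'\subseteq K$, and $e_i\wedge K$ already lies in $\slowshift ji L$. Your care in fixing the representative $x\in\extalg^{k-1}V^{(j)}$ is the right way to remove the ambiguity in the statement.
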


The following will be useful as a base case for an inductive argument
that Algorithm~\ref{alg:ShiftingAlg} terminates.
\begin{cor}
\label{cor:RepeatedSlowShift}For any $1\leq i<j\leq n$ and subspace
$L$ of $\extalg^{k}V$, we have $$\slowshift ji \slowshift ji \slowshift ji L=\slowshift ji \slowshift ji L.$$
Moreover, if $L$ is monomial with respect to $e_{j}$, then $\slowshift ji \slowshift ji L= \slowshift ji L$.
\end{cor}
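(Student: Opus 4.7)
The plan is to reduce the first (triple-shift) statement to the second (double-shift) statement, and then to prove the double-shift statement using Lemma~\ref{lem:SlowShiftMonomialAtj}.

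For the reduction, observe that by Lemma~\ref{lem:SlowShiftMakesMonomial}, the subspace $\slowshift{j}{i} L$ is monomial with respect to $e_j$. So if the second (``moreover'') statement holds in general, applying it to $\slowshift{j}{i} L$ in place of $L$ yields $\slowshift{j}{i}\slowshift{j}{i}(\slowshift{j}{i} L) = \slowshift{j}{i}(\slowshift{j}{i} L)$, which is the first statement.

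For the moreover part, assume $L$ is monomial with respect to $e_j$. I would apply Lemma~\ref{lem:SlowShiftMonomialAtj} once to write
\[
\slowshift{j}{i} L = \left(\big(L \cap \extalg^{k} V^{(j)}\big) + e_i \wedge K\right) \oplus \big(e_j \wedge K'\big),
\]
where $K$ is the largest subspace of $\extalg^{k-1} V^{(j)}$ with $e_j \wedge K \subseteq L$ and $K'$ is the largest subspace of $K$ with $e_i \wedge K' \subseteq L$. Since $\slowshift{j}{i} L$ is monomial with respect to $e_j$ by Lemma~\ref{lem:SlowShiftMakesMonomial}, I can apply the same lemma to $\slowshift{j}{i} L$. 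Let $K_1$ and $K_1'$ denote the analogues of $K$ and $K'$ for this second application. From the decomposition, the $e_j$-multiples inside $\slowshift{j}{i} L$ are exactly $e_j \wedge K'$, so $K_1 = K'$.

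The key computation is then to identify $K_1'$, namely the largest subspace of $K_1 = K'$ with $e_i \wedge K_1' \subseteq \slowshift{j}{i} L$. But by construction $e_i \wedge K \subseteq \slowshift{j}{i} L$, and $K' \subseteq K$, so automatically $e_i \wedge K' \subseteq \slowshift{j}{i} L$. Hence $K_1' = K'$. Plugging back into Lemma~\ref{lem:SlowShiftMonomialAtj} gives
\[
\slowshift{j}{i}\slowshift{j}{i} L = \left(\big(L \cap \extalg^{k} V^{(j)}\big) + e_i \wedge K + e_i \wedge K'\right) \oplus \big(e_j \wedge K'\big),
\]
and since $K' \subseteq K$ the extra summand $e_i \wedge K'$ is absorbed, leaving $\slowshift{j}{i} L$ on the right.

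The only potential obstacle is correctly bookkeeping the roles of $K$ and $K'$ across the two applications of Lemma~\ref{lem:SlowShiftMonomialAtj}; once one sees that $K_1 = K_1' = K'$, the identity falls out by matching summands.
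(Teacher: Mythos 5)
Your proposal is correct and is essentially the argument the paper intends: the corollary is stated without an explicit proof precisely because it follows from Lemma~\ref{lem:SlowShiftMakesMonomial} (making $\slowshift ji L$ monomial with respect to $e_j$) together with two applications of the decomposition in Lemma~\ref{lem:SlowShiftMonomialAtj}, and your bookkeeping $K_1=K_1'=K'$ fills in the details correctly.
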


We caution that there are (non-monomial) examples where $\slowshift ji \slowshift ji L\neq \slowshift ji L$.
Consider the following:
\begin{example}
The span of $e_{1}\wedge e_{2}\wedge e_{3}-e_{1}\wedge e_{2}\wedge e_{4}$
requires two applications of $\slowshift 43$ to stabilize. The first
application yields the span of $e_{1}\wedge e_{2}\wedge e_{4}$. Although
this is monomial, it is not stable under $\slowshift 43$. Performing
$\slowshift 43$ again yields $e_{1}\wedge e_{2}\wedge e_{3}$, which
is stable under further slow shifts.
\end{example}

Although it is clear that the shifting algorithm for set systems
terminates, the
analogue for slow shifting of exterior subspaces requires a slightly more careful analysis.
\begin{thm}
\label{thm:ShiftingTerminates} Algorithm~\ref{alg:ShiftingAlg}
terminates, for any choice of a sequence of non-fixing slow shifts.
\end{thm}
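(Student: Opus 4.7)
The plan is to prove termination by strong induction on $|I|$. The case $|I| \leq 1$ admits no shifts and terminates trivially. For the inductive step, let $m = \max I$ and $I' = I \setminus \{m\}$. Since slow shifts require $i < j$, the only shifts involving $e_m$ are of the form $\slowshift{m}{i}$ with $i \in I'$. I will partition the execution of Algorithm~\ref{alg:ShiftingAlg} into blocks separated by these $\slowshift{m}{i}$ applications, and bound both the number of blocks and the length of each block.

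The first step is to control blocks of $I'$-shifts. Before the first $\slowshift{m}{i}$ application, all shifts lie in $I'$ and act on the original $L$, so by the inductive hypothesis that block is finite. By Lemma~\ref{lem:SlowShiftMakesMonomial}, after a single $\slowshift{m}{i}$ the subspace becomes monomial with respect to $e_m$, so $L = L_0 \oplus (e_m \wedge L_1)$ with $L_0 \subseteq \extalg^{k}V^{(m)}$ and $L_1 \subseteq \extalg^{k-1}V^{(m)}$. Lemma~\ref{lem:SlowShiftMonomialAwayfromj} combined with the observation that $\mathop{N_{j\to i}(t)}$ commutes with wedging by $e_m$ whenever $m \notin \{i,j\}$ (so by functoriality the limit commutes as well) shows that any subsequent $I'$-shift acts as $\slowshift{j}{i}L = \slowshift{j}{i}L_0 \oplus (e_m \wedge \slowshift{j}{i}L_1)$. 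Hence $I'$-shifts act independently on $L_0$ and $L_1$, and by the induction hypothesis applied to each of these in turn, every block has finite length.

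The second step is to bound the number of $\slowshift{m}{i}$ applications. Applying Lemma~\ref{lem:SlowShiftMonomialAtj} with $j = m$ gives $K = L_1$ and $K' = \{x \in L_1 : e_i \wedge x \in L_0\} \subseteq L_1$, so the new $e_m$-part is $e_m \wedge K'$. A direct check from the displayed formula in that lemma shows $\slowshift{m}{i}L = L$ exactly when $K' = L_1$, so any non-fixing $\slowshift{m}{i}$ strictly decreases $\dim L_1$. Since $0 \leq \dim L_1 \leq \dim L$, at most $\dim L + 1$ such shifts occur. Combining with the block analysis yields finiteness of the entire execution. The main subtlety is the verification that the limit commutes with wedging by $e_m$; this is a short functoriality check using the canonical representative construction from Section~\ref{subsec:LimitActions}, reflecting the fact that $e_m \wedge {\cdot}$ induces an embedding of Grassmannians compatible with the $\mathop{N_{j \to i}(t)}$-action when $m \notin \{i,j\}$.
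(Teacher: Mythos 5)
Your proof is correct and follows essentially the same route as the paper: induction on $|I|$, with Lemma~\ref{lem:SlowShiftMonomialAtj} showing that each non-fixing shift at the maximal index strictly decreases the dimension of the $e_m$-multiple part while Lemma~\ref{lem:SlowShiftMonomialAwayfromj} shows that the other shifts preserve it. The only differences are presentational: the paper applies the inductive hypothesis once to the (all-$I'$) tail following the last $\slowshift{m}{i}$, whereas you decompose the run into blocks and further split $L$ as $L_0\oplus(e_m\wedge L_1)$ --- the commuting-with-$e_m\wedge{\cdot}$ claim you flag is true but unnecessary, since the inductive hypothesis already applies directly to each block acting on the full current subspace.
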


\begin{proof}
We work by induction on the size of the set of permissible indices
$I$ given as input to the algorithm. If $\left|I\right|=1$, then
the result is trivial, and if $\left|I\right|=2$, then the result
follows from Corollary~\ref{cor:RepeatedSlowShift}. 

If $\left|I\right|>2$, then let $b$ be the greatest index in $I$.
If there is no $\slowshift bi$ operation in the sequence, then by induction
(examining $I\setminus\{b\}$) the algorithm terminates. The first
$\slowshift bi$ operation makes $L$ monomial with respect to $e_{b}$.
By Lemma~\ref{lem:SlowShiftMonomialAtj}, each additional non-fixing
$\slowshift bi$ operation reduces the dimension of the subspace consisting
of multiples of $e_{b}$. By Lemma~\ref{lem:SlowShiftMonomialAwayfromj},
each other slow shift operation preserves the dimension of this subspace.
Thus, there are finitely many non-fixing $\slowshift bi$ operations.
The result now follows by induction on $I\setminus\{b\}$.
\end{proof}
As in the set system case \cite[Proposition 2.2]{Frankl:1987}, it
is efficient to shift first from the greatest indices.  Here we base our lexicographic order on the usual order on $[n]$.
\begin{prop}
\label{prop:ShiftingTerminatesEff}If we choose at each step of Algorithm~\ref{alg:ShiftingAlg}
the lexicographically last ordered pair $(j,i)$ where $\slowshift ji$ is non-fixing,
then the algorithm terminates in at most $\left|I\right|-1+\binom{\left|I\right|}{2}$
iterations.
\end{prop}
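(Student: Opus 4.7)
We plan to proceed by induction on $|I|$. For $|I|=1$ there are zero iterations, and for $|I|=2$ Corollary~\ref{cor:RepeatedSlowShift} gives the bound of two iterations; both cases match $|I|-1+\binom{|I|}{2}$. For the inductive step, let $m=|I|\ge 3$, $b=\max I$, and $I'=I\setminus\{b\}$. The plan is to split the total iteration count into $\slowshift{b}{\cdot}$ operations (at most $m$) and $\slowshift{j}{i}$ operations with $j\in I'$ (at most $(m-2)+\binom{m-1}{2}$ by induction). Since $\binom{m}{2}=(m-1)+\binom{m-1}{2}$, these sum to the required $(m-1)+\binom{m}{2}$.

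The key structural setup is that once the first $\slowshift{b}{\cdot}$ operation has been performed, $L$ becomes monomial with respect to $e_b$ (Lemma~\ref{lem:SlowShiftMakesMonomial}), a property that is preserved by every subsequent shift (Lemma~\ref{lem:SlowShiftMonomialAwayfromj}). Writing $L=L_0\oplus e_b\wedge K$, the pair $(b,i)$ is fixing if and only if $e_i\wedge K\subseteq L_0$, and Lemma~\ref{lem:SlowShiftMonomialAtj} shows that applying $\slowshift{b}{i'}$ shrinks $K$ while enlarging $L_0$ in a way that preserves this containment for every $i''\neq i'$. Hence within a maximal consecutive run of $\slowshift{b}{\cdot}$ operations under the lex-last rule, each pair $(b,i)$ is applied at most once, yielding at most $m-1$ operations per run. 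For the second count, we plan to invoke the induction hypothesis applied to $L$ with the smaller index set $I'$: a careful verification shows that the non-$b$ operations in our algorithm can be bounded by the iteration count of the restricted algorithm, giving $(m-2)+\binom{m-1}{2}$.

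The main obstacle is to tighten the bound on $\slowshift{b}{\cdot}$ operations to $m$ in the presence of reactivations between runs. The essential observation is that for $j',i'\in I'$ and $i''\neq j'$, the operator $N_{j'\to i'}(t)$ fixes $e_{i''}$, so $\slowshift{j'}{i'}$ commutes with $e_{i''}\wedge(-)$ at the level of Grassmannian limits. This forces the containment $e_{i''}\wedge K\subseteq L_0$ to be preserved by such operations, so that the only pair $(b,i'')$ whose fixing status a single intervening $\slowshift{j'}{i'}$ can switch is the one with $i''=j'$. To complete the bound I plan to combine the monovariant $\dim K$, which is non-increasing throughout the algorithm and strictly decreases with every non-fixing $\slowshift{b}{\cdot}$ after the first (as in the proof of Theorem~\ref{thm:ShiftingTerminates}), with a charging argument that pairs each reactivation of some $(b,j')$ with a step already accounted for in the induction on $I'$. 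The envisioned outcome is a decomposition of the total $\slowshift{b}{\cdot}$ count into one initial monomialization step plus one operation per element of $I'$, yielding exactly $m$.
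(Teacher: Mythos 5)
Your overall accounting (an initial block of at most $|I|$ operations for the top index $b$, plus an inductive count of $(|I|-2)+\binom{|I|-1}{2}$ for the remaining indices, summing via $\binom{|I|}{2}=(|I|-1)+\binom{|I|-1}{2}$) matches the paper's, and your analysis of a single maximal run of $\slowshift{b}{\cdot}$ operations is essentially the argument given there. But the proof has a genuine gap exactly where you flag "the main obstacle": you never actually rule out reactivations, and the two devices you propose do not close the hole. The claimed commutation of $\slowshift{j'}{i'}$ with $e_{i''}\wedge(-)$ "at the level of Grassmannian limits" is not justified (limits of subspaces do not commute with such operations in general, and $K$ itself is transformed by the intervening shift), and even granting it you concede that the pair $(b,j')$ may be reactivated; the charging argument meant to absorb those reactivations is only announced, not carried out. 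Since a reactivated pair $(b,j')$ would, under the lex-last rule, interrupt the $I'$-phase, its absence is also what legitimizes your appeal to the induction hypothesis on $I'$ as a bound on the non-$b$ operations; so the missing step undermines both halves of your count.

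The paper closes this hole with a single structural observation that you can substitute for your charging argument. After the first run of $\slowshift{b}{i}$ over \emph{all} $i<b$ in $I$, Lemma~\ref{lem:SlowShiftMonomialAtj} (applied repeatedly) gives $e_{i}\wedge K\subseteq L$ simultaneously for every such $i$, where $e_{b}\wedge K$ is the $e_b$-multiple part of $L$; hence $\bigoplus_{i\in I}e_{i}\wedge K\subseteq L$. This direct sum is itself carried into the image under every further $\slowshift{j'}{i'}$ with $i',j'\in I$ (it already contains both $e_{i'}\wedge K$ and $e_{j'}\wedge K$, and containment of subspaces is preserved by limits), so the condition "$e_{i}\wedge K\subseteq L\cap\extalg^{k}V^{(b)}$" that makes $\slowshift{b}{i}$ fixing persists for all $i$ for the rest of the procedure --- including $i=j'$, the case your commutation heuristic cannot reach. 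With no reactivations, the $b$-operations number at most $|I|$ (one possible repeat of the very first pair to achieve monomiality, then at most one per remaining $i$), and the subsequent operations form a genuine lex-last run on $I\setminus\{b\}$ to which the induction hypothesis applies.
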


\begin{proof}
Let $b$ be the last element of $I$. We first apply $\slowshift bi$
over all $i<b$, in decreasing order, performing the first such shift
twice if possible (and so necessary). After the first slow shift,
we have a subspace $L$ that is monomial with respect to $e_{b}$.
Additional slow shifts preserve monomiality with respect to $e_{b}$
by Lemma~\ref{lem:SlowShiftMonomialAtj}. Moreover, since $L\cap\extalg^{k}V^{(b)}\subseteq \slowshift bi \left(L\cap\extalg^{k}V^{(b)}\right)$,
each $\slowshift bi$ is non-fixing for at most one application.

Suppose that we have completed all slow shifts $\slowshift bi$ on $L$,
and let $e_{b}\wedge K$ be as in Lemma~\ref{lem:SlowShiftMonomialAtj}.
Repeated application of Lemma~\ref{lem:SlowShiftMonomialAtj} then
gives that $e_{i}\wedge K\subseteq L$ for each $i<b$ in $I$, hence
that $\bigoplus_{i\in I}e_{i}\wedge K\subseteq L$. Now by another
application of Lemma~\ref{lem:SlowShiftMonomialAtj}, the subspace
$\bigoplus_{i\in I}e_{i}\wedge K$ is preserved under all further
$\slowshift ji$ operations with $i,j\in I$. In particular, $\slowshift bi$
will fix $L$ for all $i$ throughout the remainder of the slow
shifting procedure.

That the process terminates now follows by induction on $I\setminus\{b\}$.
In the worst case we shift at each pair $i<j$, and also one extra
time at each element of $I \setminus \min I$ in order to guarantee monomiality.
 Thus, we have at most $\left|I\right|-1+\binom{\left|I\right|}{2}$ slow
shifts in this process.
\end{proof}
\begin{rem}
It is instructive to compare and contrast with \cite[Proposition 2.2]{Frankl:1987},
which applies a similar sequence of combinatorial shifting operations
to sets in the case where $I=[n]$. The set situation requires only $\binom{n}{2}$
operations, since it is not required to first make the system monomial
with respect to each $i$.
\end{rem}

By Theorem~\ref{thm:ShiftingTerminates} and Proposition~\ref{prop:ShiftingTerminatesEff},
given a subspace $L$ of $\extalg^{k}V$, we may find a subspace of
the same dimension that is stable under $\slowshift ji$ over all $i<j$
both in $I$. We now describe the resulting subspaces.

\begin{prop}
\label{prop:StableShifted}Let $L$ be a subspace of $\extalg^{k}V$,
and let $I\subseteq[n]$. If $L$ is stable under slow shifting $\slowshift ji$
over all $i<j$ in $I$, then for each $x\in\extalg V^{(I)}$, the variable sets
of the monomials $y$ in $\extalg V^{([n] \setminus I)}$ so that $x\wedge y\in L$
form a shifted set system.
\end{prop}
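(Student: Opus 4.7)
My plan is to show that $\mathcal{F}_x := \{S \subseteq I : x \wedge e_S \in L\}$ is shifted with respect to $I$, by computing explicitly how $\slowshift{j}{i}$ acts on a specific vector of $L$ and invoking the stability hypothesis $\slowshift{j}{i} L = L$. The case $x = 0$ is trivial (it gives $\mathcal{F}_x = 2^I$), so I may assume $x \neq 0$; then $x \wedge e_S \neq 0$ for every $S \subseteq I$, because $x$ uses only basis vectors indexed outside $I$ while $S \subseteq I$, so $x$ and $e_S$ share no variables.

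Fix $i < j$ in $I$ and $S \in \mathcal{F}_x$ with $j \in S$ and $i \notin S$; I must exhibit $\shift{j}{i}(S) \in \mathcal{F}_x$. Set $z := x \wedge e_S \in L$. Since $x$ does not involve $e_j$ (as $j \in I$), I can extract the factor $e_j$ from $e_S$ and write $z = \pm e_j \wedge w$ with $w := x \wedge e_{S \setminus \{j\}}$. The key point is that $w$ involves neither $e_j$ (since neither factor does) nor $e_i$ (since $i \in I$ is not in the support of $x$, and $i \notin S \setminus \{j\}$). I now apply formula~(\ref{eq:SlowShiftExtalg}) to $z$ with the ``$x$''-part of that formula equal to $0$ and its ``$y$''-part equal to $w$. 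Since $w \neq 0$ and $w$ contains no $e_i$, we have $e_i \wedge w \neq 0$, so the formula yields $\slowshift{j}{i} z = \pm e_i \wedge w$, which after reordering equals $\pm x \wedge e_{\shift{j}{i}(S)}$.

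By the Grassmannian proposition of Section~\ref{subsec:LimitActions}, $\slowshift{j}{i} L$ is the span of $\{\slowshift{j}{i} v : v \in L\}$, and by the stability hypothesis this span equals $L$. Hence a representative of $\slowshift{j}{i} z$ lies in $L$, and since $L$ is a subspace closed under scalar multiplication, $x \wedge e_{\shift{j}{i}(S)} \in L$, i.e., $\shift{j}{i}(S) \in \mathcal{F}_x$, as required. The one delicate step worth flagging is this passage from the projective action of $\slowshift{j}{i}$ to a statement about honest vectors in $L$; it is handled by the span description of $\slowshift{j}{i} L$ together with the subspace structure of $L$, which absorbs the sign and scalar ambiguity coming from reordering exterior monomials and from choosing a representative on projective space.
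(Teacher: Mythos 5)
Your proof is correct and takes essentially the same route as the paper: the paper's argument is the one-line observation that $\slowshift ji(x\wedge y)=x\wedge\slowshift ji y$ together with the fact that $\slowshift ji$ acts on monomials as combinatorial shifting of variable sets, and your computation via~(\ref{eq:SlowShiftExtalg}) with the factorization $z=\pm e_j\wedge w$ is exactly the verification of that identity, with the sign/representative bookkeeping made explicit. The passage from the projective limit action back to membership in the subspace $L$, which you rightly flag, is justified precisely as you say by the span description of $\slowshift ji L$ from Section~\ref{subsec:LimitActions}.
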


\begin{proof}
Let $x \wedge y$ be in the hypothesis.  By definition, $\slowshift ji x \wedge y = x \wedge \slowshift ji y$.  As we observed in Section~\ref{subsec:SlowShift} that $\slowshift ji$ acts on monomials as combinatorial shifting on their variable sets, the result follows. 
\end{proof}

\begin{thm}
\label{thm:MonomialBasisAfterShifting}Let $L$ be a subspace of $\extalg^{k}V$,
 let $I\subseteq[n]$, and let $a$ be $\min I$. If $L$ is stable under slow shifting $\slowshift ji$
over all $i<j$ in $I$, then $L$ has a basis consisting of elements of
the form $x\wedge y$, where $x$ is a homogeneous form in $\extalg V^{(I\setminus a)}$
and $y$ is a monomial whose variable set is a subset of $\left\{ e_{h}:h\in I\right\} $.
\end{thm}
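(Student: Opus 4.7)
The plan is to induct on $\left|I\right|$, peeling off the maximum element at each step via the monomiality provided by shifting.

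For the base case $\left|I\right|=1$, we have $I \setminus a = \emptyset$, so any element of $\extalg V$ qualifies as $x$, and we may take $y=1$. Any basis of $L$ (homogeneous of degree $k$ by hypothesis) then has the required form.

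For $\left|I\right| \geq 2$, let $b = \max I$. Since $a < b$ and $\slowshift{b}{a} L = L$, Lemma~\ref{lem:SlowShiftMakesMonomial} shows $L$ is monomial with respect to $e_b$, so
\[
L = L_0 \oplus (e_b \wedge K), \qquad \text{where } L_0 = L \cap \extalg^k V^{(b)},
\]
and $K \subseteq \extalg^{k-1} V^{(b)}$ is the largest subspace with $e_b \wedge K \subseteq L$. The key technical step is to show that both $L_0$ and $K$ are stable under every $\slowshift{j}{i}$ with $i<j$ in $I \setminus b$. Applying Lemma~\ref{lem:SlowShiftMonomialAwayfromj} with $h = b$ (noting $i, j \neq b$) decomposes $\slowshift{j}{i} L$ into a part in $\extalg^k V^{(b)}$ and a part in $e_b \wedge \extalg^{k-1} V^{(b)}$; comparing with $L = L_0 \oplus (e_b \wedge K)$ and using dimension preservation forces $\slowshift{j}{i} L_0 = L_0$. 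Since $\slowshift{j}{i} e_b = e_b$, we also get $\slowshift{j}{i}(e_b \wedge K) = e_b \wedge \slowshift{j}{i} K$, and the maximality of $K$ together with dimension preservation yields $\slowshift{j}{i} K = K$.

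Now apply the inductive hypothesis to $L_0 \subseteq \extalg^k V$ and to $K \subseteq \extalg^{k-1} V$, both with the index set $I' = I \setminus b$ (whose minimum is still $a$). This produces bases $\{x_\alpha \wedge y_\alpha\}$ of $L_0$ and $\{x'_\beta \wedge y'_\beta\}$ of $K$ with $x_\alpha, x'_\beta \in \extalg V^{(I \setminus \{a,b\})}$ and $y_\alpha, y'_\beta$ monomials in the variables $\{e_h : h \in I \setminus b\}$. Since $L_0$ and $K$ lie in $\extalg V^{(b)}$, every basis element avoids $e_b$ entirely; as $y_\alpha$ and $y'_\beta$ already avoid $e_b$, so do $x_\alpha$ and $x'_\beta$, giving $x_\alpha, x'_\beta \in \extalg V^{(I \setminus a)}$ as required. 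The $L_0$-basis already has the desired form. For $K$, wedging with $e_b$ yields a basis of $e_b \wedge K$ consisting of elements
\[
e_b \wedge x'_\beta \wedge y'_\beta = \pm\, x'_\beta \wedge (e_b \wedge y'_\beta),
\]
where $e_b \wedge y'_\beta$ is a monomial in variables $\{e_h : h \in I\}$. Concatenating produces the desired basis of $L$.

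The main obstacle is the stability of $K$ under the slow shifts with indices in $I \setminus b$; this is where the maximality in the definition of $K$ together with $\slowshift{j}{i} e_b = e_b$ for $i,j \neq b$ play an essential role. Once that is in hand, the rest is bookkeeping together with the observation that confining ourselves to $\extalg V^{(b)}$ removes the last stray $I$-index from the $x$-parts produced by induction.
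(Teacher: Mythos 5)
Your proof is correct and takes essentially the same route as the paper, which disposes of the statement by citing repeated application of Lemmas~\ref{lem:SlowShiftMakesMonomial} and~\ref{lem:SlowShiftMonomialAwayfromj} together with direct-sum bookkeeping; your induction on $\left|I\right|$ peeling off $b=\max I$ is exactly that repeated application made explicit. The one step you assert rather than verify, namely $\slowshift ji(e_{b}\wedge K)=e_{b}\wedge\slowshift ji K$ for $i,j\neq b$, does follow readily from (\ref{eq:SlowShiftExtalg}), so there is no gap.
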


\begin{proof}
Immediate from repeated application of Lemmas~\ref{lem:SlowShiftMakesMonomial}
and \ref{lem:SlowShiftMonomialAwayfromj}, together with basic facts
about direct sums.
\end{proof}
Thus, Theorem~\ref{thm:MonomialBasisAfterShifting} says that if
$L$ is stable under $\slowshift ji$ over $I$, then $L$ is simultaneously
monomial with respect to all variables with index in $I$ except possibly for the least indexed.
In certain circumstances, we can also get monomiality with respect
to the least index of $I$.
\begin{cor}
\label{cor:SlowShiftingBorelFixed}Let $L$ be a subspace of $\extalg^{k}V$.
If $L$ is stable under slow shifting $\slowshift ji$ over all  $i,j\in[n]$,
then $L$ has a monomial basis.
\end{cor}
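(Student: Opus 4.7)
The plan is to apply Theorem~\ref{thm:MonomialBasisAfterShifting} with $I = [n]$ and interpret the conclusion carefully. Setting $a = \min I = 1$, we get $I \setminus \{a\} = \{2, 3, \dots, n\}$, and hence $V^{(I\setminus \{a\})}$ is by definition (Notation~\ref{not:FixedNotation}) the subspace spanned by $\{e_j : j \notin \{2,\dots,n\}\} = \{e_1\}$. The exterior algebra on a one-dimensional vector space has only two nonzero homogeneous components: the scalars $\ff$ and the scalar multiples of $e_1$.

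Thus by Theorem~\ref{thm:MonomialBasisAfterShifting}, $L$ has a basis consisting of elements of the form $x \wedge y$, where $x$ is either a scalar or a scalar multiple of $e_1$, and $y$ is a monomial with variable set a subset of $\{e_1,\dots,e_n\}$. In either case, $x \wedge y$ is itself a scalar multiple of a monomial (with the convention that $e_1 \wedge e_1 = 0$, but such zero basis elements may be discarded). Hence $L$ admits a monomial basis, as required. There is essentially no obstacle here: the corollary is a direct specialization of Theorem~\ref{thm:MonomialBasisAfterShifting} to the case where every index is a shifting index, leaving only $e_1$ as the ``free'' direction, and that free direction itself contributes only monomials in the one-dimensional exterior factor.
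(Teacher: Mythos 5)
Your proposal is correct and matches the paper's own proof, which likewise observes that with $I=[n]$ the space $\extalg V^{(I\setminus\{a\})}=\extalg\langle e_1\rangle$ contains, up to scalars, only the homogeneous forms $1$ and $e_1$, so each basis element $x\wedge y$ from Theorem~\ref{thm:MonomialBasisAfterShifting} is a scalar multiple of a monomial. Your extra remark about discarding the degenerate $e_1\wedge e_1=0$ cases is a harmless clarification.
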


\begin{proof}
Up to constant multiplication, the only homogeneous forms in $\extalg V^{(\{2,\dots,n\})}$
are the $0$-form $1 = e_\emptyset$ and the $1$-form $e_{1}$.
\end{proof}

We remark that if $L$ is stable under $\slowshift ji$ over all $i<j$ in
$I$, but is not monomial with respect to $a = \min I$, then it is easy
to see that applying $M_{a}$ (as in Section~\ref{subsec:MinimizeVarAction}) will result in a system that is monomial with respect to all indices in $I$. Indeed, one might see $M_{a}$ as a ``degenerate''
slow shifting operation, in the sense that we send $e_{a}$ to
zero (as in Section~\ref{subsec:LimitActions}), but do
not have a lesser indexed variable available with which to replace it.

An essential step of our main proof will use shifting over $I = \{ 3,\dots,n \}$.  

\begin{cor}
\label{cor:MonomialityAllBut2}Let $L$ be a subspace of $\extalg^{k}V$
that contains $e_{1}\wedge e_{2}\wedge\extalg^{k-2}V$ and that is
annihilated by $e_{1}\wedge e_{2}$. If $L$ is stable under slow
shifting $\slowshift ji$ over all $i,j\geq3$, then $L$ has a basis consisting
of elements that are monomial with respect to all $e_{j}$ with $j\geq3$.
\end{cor}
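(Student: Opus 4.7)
The plan is to start from the basis of $L$ provided by Theorem~\ref{thm:MonomialBasisAfterShifting} applied to $I=\{3,\dots,n\}$ (so $a=3$), whose elements have the form $x\wedge y$ with $x$ a homogeneous form in $\extalg\spanop(e_1,e_2,e_3)$ and $y$ a monomial with variable set in $\{e_3,\dots,e_n\}$. Any such element is already monomial with respect to $e_j$ for every $j\geq 4$, since the $e_j$-content is determined entirely by $y$. It remains only to refine the basis so that each element is also monomial with respect to $e_3$.

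For a basis element $x\wedge y$, decompose $x=x_0+e_3\wedge x_1$ with $x_0,x_1\in\extalg\spanop(e_1,e_2)$. If $e_3$ appears in the variable set of $y$, then $e_3\wedge x_1\wedge y=0$, so $x\wedge y=x_0\wedge y$ lies entirely in $e_3\wedge\extalg V^{(3)}$ and nothing further is required. Otherwise $x\wedge y=x_0\wedge y+e_3\wedge x_1\wedge y$ is a non-trivial decomposition into an $\extalg V^{(3)}$-part and an $e_3\wedge\extalg V^{(3)}$-part, and we must show that each summand separately lies in $L$.

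Both hypotheses enter at this step. Splitting the annihilation $e_1\wedge e_2\wedge(x\wedge y)=0$ according to $e_3$-content yields the two separate equalities $e_1\wedge e_2\wedge x_0\wedge y=0$ and $e_1\wedge e_2\wedge e_3\wedge x_1\wedge y=0$. Using that $e_1\wedge e_2\wedge y$ and $e_1\wedge e_2\wedge e_3\wedge y$ are nonzero in the non-degenerate range, and making a case analysis on $d=\deg x\in\{0,1,2,3\}$, the cases $d\in\{0,1,3\}$ each force one of $x_0,x_1$ to vanish, so no splitting is needed. The only genuinely non-trivial case is $d=2$: here $x_0=\gamma e_1\wedge e_2$ for some scalar $\gamma$, so $x_0\wedge y=\gamma e_1\wedge e_2\wedge y$ lies in $e_1\wedge e_2\wedge\extalg^{k-2}V\subseteq L$ by the containment hypothesis, and hence $e_3\wedge x_1\wedge y=(x\wedge y)-x_0\wedge y$ lies in $L$ as well. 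In every case, we have replaced $x\wedge y$ by at most two elements of $L$, each monomial with respect to every $e_j$ for $j\geq 3$.

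The key step is the degree-$2$ case, where the containment hypothesis $e_1\wedge e_2\wedge\extalg^{k-2}V\subseteq L$ is precisely what is needed to place one summand in $L$ and recover the other as a difference; everywhere else, annihilation together with dimensional considerations suffices.
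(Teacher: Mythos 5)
Your proposal is correct and follows essentially the same route as the paper: both start from the basis supplied by Theorem~\ref{thm:MonomialBasisAfterShifting} with $I=\{3,\dots,n\}$, reduce to a case analysis on $\deg x\in\{0,1,2,3\}$, use the annihilation hypothesis to kill the $e_3$-component in degree $1$, and use the containment $e_1\wedge e_2\wedge\extalg^{k-2}V\subseteq L$ to split off the $e_1\wedge e_2$ summand in degree $2$. Your explicit decomposition $x=x_0+e_3\wedge x_1$ is just a slightly more spelled-out version of the paper's ``eliminate $e_1\wedge e_2$ using the containment condition.''
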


\begin{proof}
In the basis yielded by Theorem~\ref{thm:MonomialBasisAfterShifting},
the form $x$ has exterior degree $0,1,2,$ or $3$. Without loss
of generality, the basis contains the monomials of $e_{1}\wedge e_{2}\wedge\extalg^{k-2}V$.
It is immediate that for degree $0$ or $3$, we have the desired.
For degree $1$, the annihilation condition gives that $x=\lambda_{1}e_{1}+\lambda_{2}e_{2}$.
For degree $2$, we eliminate $e_{1}\wedge e_{2}$ using the containment
condition, leaving $x=\lambda_{1}e_{1}\wedge e_{3}+\lambda_{2}e_{2}\wedge e_{3}$.
\end{proof}

It seems worth noting that, in the case $I = \{ 3, \dots, n \}$, a subspace stable under all $\slowshift ji$'s decomposes in a pleasing manner.

\begin{cor}
Let $L$ be a subspace of $\extalg^{k}V$ that contains $e_{1}\wedge e_{2}\wedge\extalg^{k-2}V$
and that is annihilated by $e_{1}\wedge e_{2}$. If $L$ is stable
under slow shifting $\slowshift ji$ over all $i,j\geq3$, then there are
pairwise linearly independent vectors $w_{1},\dots,w_{m}$ in the
span of $e_{1},e_{2}$ and subspaces $K_{1},\dots,K_{m}$ of $\extalg^{k-1}V^{(\{1,2\})}$
so that 
\[
L=e_{1}\wedge e_{2}\wedge\extalg^{k-2}V+\sum w_{i}\wedge K_{i},
\]
and with the following additional properties:
\begin{enumerate}
\item For every distinct $i,j\in[m]$, we have $K_{i}\cap K_{j}=\bigcap_{h}K_{h}$.
\item Each $K_{i}$ as well as $\bigcap_{h}K_{h}$ has a basis of
monomials whose variable sets form a shifted set system.
\end{enumerate}
\end{cor}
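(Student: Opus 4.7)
The plan is to use a direct sum decomposition of $L$ induced by Corollary~\ref{cor:MonomialityAllBut2}, classify each component by dimension, and then read off the desired structure.

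Iterating Lemma~\ref{lem:SlowShiftMonomialAwayfromj} over $j = 3, \dots, n$ decomposes $L$ as $L = \bigoplus_{T \subseteq \{3,\dots,n\}} L_T$, where $L_T := L \cap (e_T \wedge \extalg V^{(\{3,\dots,n\})})$. The hypotheses collapse this: annihilation by $e_1\wedge e_2$ forces $L_T = 0$ when $|T|=k$, the containment hypothesis forces $L_T = \spanop(e_1 \wedge e_2 \wedge e_T)$ when $|T|=k-2$, and $L_T \subseteq W \wedge e_T := \spanop(e_1 \wedge e_T, e_2 \wedge e_T)$ when $|T|=k-1$, writing $W := \spanop(e_1, e_2)$; all other $L_T$ vanish for degree reasons. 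For each $T$ with $|T| = k-1$, I classify by $\dim L_T \in \{0,1,2\}$: set $\mathcal{T}_\infty := \{T : \dim L_T = 2\}$ and, for each line $[w] \in \pp W$, set $\mathcal{T}_{[w]} := \{T : L_T = \spanop(w \wedge e_T)\}$.

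Only finitely many $\mathcal{T}_{[w]}$'s are nonempty; enumerate the corresponding representatives as $[w_1], \dots, [w_{m'}]$, and pad the list with $[e_1]$ and $[e_2]$ (with empty $\mathcal{T}_{[w_i]}$) if necessary so that $m \geq 2$ whenever $\mathcal{T}_\infty$ is nonempty. Define $K_i := \spanop(e_T : T \in \mathcal{T}_\infty \cup \mathcal{T}_{[w_i]})$. The decomposition $L = e_1 \wedge e_2 \wedge \extalg^{k-2}V + \sum_i w_i \wedge K_i$ then holds: pieces of $L$ with $|T|=k-2$ are absorbed into the first summand, pieces with $T \in \mathcal{T}_{[w_i]}$ into $w_i \wedge K_i$, and pieces with $T \in \mathcal{T}_\infty$ into $\spanop(w_i \wedge e_T : i \in [m]) = W \wedge e_T$, where the latter equality uses $m \geq 2$.

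For Property~(1), the disjointness of the $\mathcal{T}_{[w_i]}$'s yields $K_i \cap K_j = \spanop(e_T : T \in \mathcal{T}_\infty) = \bigcap_h K_h$ whenever $i \neq j$. For Property~(2), Proposition~\ref{prop:StableShifted} applied with $x = w_i$ shows that the variable sets of monomials $y$ in $\extalg^{k-1}V^{(\{1,2\})}$ satisfying $w_i \wedge y \in L$ form a shifted set system. A coefficient comparison within the direct-sum decomposition, together with the pairwise linear independence of the $[w_i]$'s, identifies this set with $\mathcal{T}_\infty \cup \mathcal{T}_{[w_i]}$: writing $y = \sum c_T e_T$, each summand $c_T (w_i \wedge e_T)$ must lie in $L_T$, which forces $c_T = 0$ whenever $T \notin \mathcal{T}_\infty \cup \mathcal{T}_{[w_i]}$. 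Thus $K_i$ admits this as a shifted monomial basis, and $\bigcap_h K_h$ inherits a shifted monomial basis by intersecting two such bases. The main obstacle is the initial direct-sum decomposition together with the coefficient comparison; once these are secured, everything else is bookkeeping.
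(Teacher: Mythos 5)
Your proof is correct and follows essentially the same route as the paper: both rest on the monomial basis from Corollary~\ref{cor:MonomialityAllBut2} and on Proposition~\ref{prop:StableShifted} for shiftedness, with your explicit $L_T$-decomposition and classification by $\dim L_T$ simply making precise the choice of the $w_i$ and $K_i$ that the paper leaves implicit (the paper takes each $K_i$ maximal with $w_i\wedge K_i\subseteq L$ and derives property~(1) from the observation that $w_i\wedge x,w_j\wedge x\in L$ forces $w_h\wedge x\in L$ for all $h$). The only cosmetic point is to choose the padded lines distinct from those already present so as to keep the $w_i$ pairwise linearly independent, which is always possible since $\pp(\spanop\{e_1,e_2\})$ has at least three points.
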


\begin{proof}
Take $K_{i}$ to be maximal under the condition that $w_i \wedge K_i \subseteq L$ and apply Corollary~\ref{cor:MonomialityAllBut2}.  Suppose $x\in K_{i}\cap K_{j}$. Then $w_{i}\wedge x$ and $w_{j}\wedge x$
are in $L$. Thus, arbitrary combinations of $w_{i}\wedge x$ and
$w_{j}\wedge x$ are in $L$. In particular, as the span of $w_i$ and $w_j$ is the same as the span of $e_1$ and $e_2$, it holds that $w_{h}\wedge x$ is
in $L$. Monomiality and shiftedness follows from Corollary~\ref{cor:MonomialityAllBut2} and Proposition~\ref{prop:StableShifted}.
\end{proof}

\section{\label{sec:CrossAnn}Cross-annihilating subspaces}

In this section, we prove Theorem~\ref{thm:CrossAnnihilating-ext}.
With the framework that we have developed, the proof is easy. By Lemma~\ref{lem:LimitsPreserveCrossAnn},
slow shifting preserves the cross-annihilating property. Applying
the slow shifting procedure over $I=[n]$, by Theorem~\ref{thm:ShiftingTerminates},
Corollary~\ref{cor:SlowShiftingBorelFixed} and Proposition~\ref{prop:StableShifted}
(with $x=1$), we can reduce to the case where $K$ and $L$ have
a monomial basis. The variable sets of the monomials in the bases form
shifted, nonempty, cross-intersecting set systems. The result now
follows by \cite[Theorem 2]{Hilton/Milner:1967}.

\section{\label{sec:HMproof}Nontrivially self-annihilating subspaces}

In this section, we prove Theorem~\ref{thm:HMext}. 

We use a similar proof strategy as in the proof of the set system result.  We briefly sketch this proof: one reduces to the case where either
$1$ or $2$ is in every set in the set system, then apply a cross-intersecting
bound. 
A difficulty in extending this approach to the exterior algebra case is that
an element of $L$ may have both $e_{1}$ and $e_{2}$ occurring in its support,
but not have $e_{1}\wedge e_{2}$ as a factor. 

\subsection{Main lemma}

The following technical lemma says that Theorem~\ref{thm:HMext} holds under the extra conditions of
annihilation by a $2$-form and partial annihilation by a $1$-form
factor.
\begin{lem}
\label{lem:HMext-2ann}Let $k\leq n/2$, and let $f,g\in V$. If $L$
is a nontrivially self-annihilating subspace of $\extalg^{k}V$, such
that $L$ is annihilated by $f\wedge g$, and where some element
of $L$ is annihilated by $f$ but not by $g$, then $\dim L\leq\binom{n-1}{k-1}-\binom{n-k-1}{k-1}+1$.
\end{lem}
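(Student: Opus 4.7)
Since $L$ contains an element annihilated by $f$ but not by $g$, the pair $f,g$ must be linearly independent; as the conclusion is basis-invariant, I will extend $\{f,g\}$ to a basis with $f = e_{1}$ and $g = e_{2}$. Annihilation by $e_{1}\wedge e_{2}$ means exactly that every $x\in L$ decomposes uniquely as
\[
x \;=\; e_{1}\wedge e_{2}\wedge a \;+\; e_{1}\wedge b \;+\; e_{2}\wedge c,\qquad a,b,c\in \extalg V^{(\{1,2\})}.
\]
Set $L_{12} = L\cap(e_{1}\wedge e_{2}\wedge \extalg^{k-2}V^{(\{1,2\})})$, so $\dim L_{12}\leq \binom{n-2}{k-2}$, and let $\phi\colon L\to \extalg^{k-1}V^{(\{1,2\})}\oplus \extalg^{k-1}V^{(\{1,2\})}$ be the map $x\mapsto (b,c)$; then $\ker\phi = L_{12}$. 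Define $B = \{b : (b,0)\in \phi(L)\}$ and $C^{*} = \pi_{2}(\phi(L))$, the second-coordinate projection. Applying rank-nullity to $\pi_{2}$ on $\phi(L)$ gives $\dim\phi(L) = \dim B + \dim C^{*}$, and hence $\dim L = \dim L_{12} + \dim B + \dim C^{*}$.

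The key computation is an expansion of $x\wedge x'$ for $x,x'\in L$: every term with a repeated $e_{1}$ or $e_{2}$ vanishes, and the surviving terms collect (up to sign) as $x\wedge x' = (-1)^{k-1}\, e_{1}\wedge e_{2}\wedge(b\wedge c' - c\wedge b')$. Since $k\leq n/2$, multiplication by $e_{1}\wedge e_{2}$ is injective on $\extalg^{2k-2}V^{(\{1,2\})}$, so self-annihilation is equivalent to the identity $b\wedge c' = c\wedge b'$ for every pair from $L$. Specializing to $c = 0$ yields $B\wedge C^{*} = 0$: the subspaces $B$ and $C^{*}$ of $\extalg^{k-1}V^{(\{1,2\})}$ are cross-annihilating.

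It then remains to check that $B$ and $C^{*}$ are both nonzero and to conclude. The hypothesis provides an $x\in L$ with $e_{1}\wedge x = 0$, which forces $c = 0$ in the decomposition, and $e_{2}\wedge x \neq 0$; since $e_{2}\wedge x = -e_{1}\wedge e_{2}\wedge b$ once $c=0$, we get $b\neq 0$ and hence $B\neq 0$. Nontriviality of $L$ says no $1$-form annihilates $L$, so in particular some $x\in L$ has $e_{1}\wedge x = e_{1}\wedge e_{2}\wedge c \neq 0$, forcing some $c\neq 0$ and $C^{*}\neq 0$. Since $\dim V^{(\{1,2\})} = n-2$ and $k-1\leq (n-2)/2$, Theorem~\ref{thm:CrossAnnihilating-ext} applied to $B$ and $C^{*}$ yields $\dim B + \dim C^{*}\leq \binom{n-2}{k-1} - \binom{n-k-1}{k-1} + 1$. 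Combining with $\dim L_{12}\leq \binom{n-2}{k-2}$ and Pascal's identity gives the claimed bound. The main obstacle will be the sign bookkeeping in the key computation; the payoff is the clean antisymmetric form $b\wedge c' - c\wedge b'$, from which the cross-annihilating structure of $B$ and $C^{*}$ drops out after the $c=0$ specialization.
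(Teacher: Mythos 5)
Your proposal is correct and follows essentially the same route as the paper: decompose each element of $L$ relative to $f=e_{1}$, $g=e_{2}$ using annihilation by $f\wedge g$, extract from the $e_{1}$- and $e_{2}$-coefficients a pair of nonzero cross-annihilating subspaces of $\extalg^{k-1}V^{(\{1,2\})}$, and conclude by Theorem~\ref{thm:CrossAnnihilating-ext} together with Pascal's identity. The only cosmetic differences are that the paper enlarges $L$ so that $f\wedge g\wedge\extalg^{k-2}V\subseteq L$ instead of bounding $\dim L_{12}$ by $\binom{n-2}{k-2}$ directly, and chooses explicit complementary subspaces where you use the projection $\pi_{2}$ and rank--nullity.
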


\begin{proof}
Let $V_{0} \subseteq V$ be a complementary subspace to the span
of $\{f,g\}$. Without loss of generality, it holds that $A=f\wedge g\wedge\extalg^{k-2}V$
is contained in $L$, as otherwise $A+L$ satisfies the hypothesis.

We now split $L$ into the direct sum of three subspaces: the subspace
$A$ as in the previous paragraph, the subspace $B$ of elements that are annihilated
by $f$ but not by $g$, and a complementary subspace $C$. Moreover,
$B$ consists of elements of the form $f\wedge x$, and we can choose
$C$ to consist of elements of the form $f\wedge x+g\wedge y$, where
$x,y\in\extalg^{k-1}V_{0}$ and $y\neq0$. 

Now we take $B'\subseteq\extalg^{k-1}V_{0}$ to be $\{x:f\wedge x\in B\}$,
and $C'\subseteq\extalg^{k-1}V_{0}$ to be 
\[
\left\{ y:f\wedge x+g\wedge y\in C\text{ for some }x\right\} .
\]
Since no element of $C$ is annihilated by $f$, we get that $\dim C'=\dim C$.
Since $B$ and $C$ are cross-annihilating, and by the definition
of exterior multiplication, we obtain that $B'$ and $C'$ are cross-annihilating.

But now by Theorem~\ref{thm:CrossAnnihilating-ext}, we see that 
\[
\dim L=\dim A+\dim B'+\dim C'\leq\binom{n-2}{k-2}+\binom{n-2}{k-1}-\binom{n-k-1}{k-1}+1.
\]
The result now follows by the Pascal's triangle identity.
\end{proof}

\subsection{Other lemmas}

We also need several lemmas that parallel results in combinatorial
set theory.

The first is immediate by Lemma~\ref{lem:LimitsPreserveCrossAnn},
and parallels that if $1$ or $2$ is in every set $F\in\mathcal{F}$, then
the same holds after combinatorial shifting.
\begin{lem}
\label{lem:Ann2formPreserved}If $L\subseteq\extalg^{k}V$ is a subspace
so that $e_{1}\wedge e_{2}\wedge L=0$, then also $e_{1}\wedge e_{2}\wedge(\slowshift ji L)=0$
for every $3\leq i<j\leq n$.
\end{lem}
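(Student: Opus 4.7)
The plan is to reduce the statement to an application of Lemma~\ref{lem:LimitsPreserveCrossAnn}, by reinterpreting the hypothesis $e_{1}\wedge e_{2}\wedge L = 0$ as a cross-annihilation statement between $L$ and the one-dimensional subspace $L_{0} = \spanop\{ e_{1}\wedge e_{2} \} \subseteq \extalg^{2}V$. With this reformulation the lemma becomes a routine check that slow shifting respects cross-annihilation together with the observation that $L_{0}$ is fixed by $\slowshift ji$ in the relevant range of indices.

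First I would verify that $\slowshift ji$ fixes the line spanned by $e_{1}\wedge e_{2}$ when $3\leq i<j\leq n$. By the definition of $N_{j\to i}(t)$, only $e_{j}$ is moved and all other basis vectors are fixed; in particular, since both $i$ and $j$ are at least $3$, neither $e_{1}$ nor $e_{2}$ is affected. Hence $\mathop{N_{j\to i}(t)}(e_{1}\wedge e_{2}) = e_{1}\wedge e_{2}$, so after extracting the canonical representative and evaluating at $t=0$ we obtain $\slowshift ji L_{0} = L_{0}$.

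Next, by hypothesis every $x\in L$ satisfies $(e_{1}\wedge e_{2})\wedge x = 0$, so $L_{0}$ and $L$ are cross-annihilating subspaces of $\extalg V$. Lemma~\ref{lem:LimitsPreserveCrossAnn} then guarantees that $\slowshift ji L_{0}$ and $\slowshift ji L$ are also cross-annihilating. Combined with the previous step, this yields $(e_{1}\wedge e_{2})\wedge(\slowshift ji L) = 0$, which is the desired conclusion.

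I do not foresee any real obstacle here: the only thing to check carefully is that the index restriction $i,j\geq 3$ is used exactly to ensure that $e_{1}\wedge e_{2}$ lies in the fixed locus of $\slowshift ji$, and everything else is handed to us by Lemma~\ref{lem:LimitsPreserveCrossAnn}.
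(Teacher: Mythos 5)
Your proof is correct and is exactly the paper's argument: the paper dispatches this lemma as ``immediate by Lemma~\ref{lem:LimitsPreserveCrossAnn},'' and the details you supply --- viewing the hypothesis as cross-annihilation between $L$ and $\spanop\{e_{1}\wedge e_{2}\}$, and noting that $\slowshift ji$ fixes $e_{1}\wedge e_{2}$ since $i,j\geq3$ --- are precisely the intended ones.
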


The second is immediate by computation. 
\begin{lem}
\label{lem:Ann2formto1form}Let $f,g,\ell$ be $1$-forms of $\extalg V$,
such that $f$ and $g$ are linearly independent. If $L$ is a subspace
of $\extalg^{k}V$ so that $f\wedge g\wedge\extalg^{k-2}V\subseteq L$,
and so that $\ell$ annihilates $L$, then $\ell$ is in the span
of $\{f,g\}$.
\end{lem}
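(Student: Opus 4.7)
The plan is to prove the contrapositive: assume $\ell\notin\spanop\{f,g\}$ and exhibit an element of $L$ that $\ell$ fails to annihilate. Since $\{\ell,f,g\}$ is then linearly independent in $V$, we may extend it to an ordered basis $v_1=\ell,\ v_2=f,\ v_3=g,\ v_4,\dots,v_n$ of $V$. The hypothesis $f\wedge g\wedge\extalg^{k-2}V\subseteq L$ implicitly forces $k\geq 2$; in the context of Theorem~\ref{thm:HMext} where this lemma will be applied, we also have $k\leq n/2$, so in particular $k+1\leq n$.

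With the basis in hand, I would set $y=v_4\wedge v_5\wedge\cdots\wedge v_{k+1}\in\extalg^{k-2}V$, so that $f\wedge g\wedge y$ lies in $L$ by hypothesis. Computing in the chosen basis gives
\[
\ell\wedge(f\wedge g\wedge y)=v_1\wedge v_2\wedge v_3\wedge v_4\wedge\cdots\wedge v_{k+1},
\]
which is a nonzero basis monomial of $\extalg^{k+1}V$. This contradicts the assumption that $\ell$ annihilates $L$, completing the proof.

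There is no real obstacle here: the argument is a direct basis computation, consistent with the preceding remark that the lemma is ``immediate by computation''. The only point worth flagging is the necessity of the inequality $k+1\leq n$ (which guarantees that a nonzero $3$-form can always be extended to a nonzero $(k+1)$-form by wedging with a suitable $(k-2)$-form), and this is automatic from the standing hypotheses of Theorem~\ref{thm:HMext}.
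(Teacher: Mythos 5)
Your proof is correct and is exactly the kind of direct basis computation the paper intends when it says the lemma is ``immediate by computation'': extend $\{\ell,f,g\}$ to a basis, wedge $f\wedge g$ with a complementary $(k-2)$-fold product of basis vectors to get an element of $L$ not killed by $\ell$. Your remark that the argument needs $k+1\leq n$ (which holds since $2\leq k\leq n/2$ wherever the lemma is invoked) is a legitimate and worthwhile observation.
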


The third is non-trivial, and requires analysis of the slow shifting
operation.
\begin{lem}
\label{lem:Technical2div}Let $\ell$ be a $1$-form of $\extalg V$.
If $L$ is a subspace of $\extalg^{k}V$ such that $\ell\wedge \slowshift ji L=0$,
then $\ell\wedge(e_{i}-e_{j})\wedge L=0$. 

Moreover, if $L$ is nontrivial, then $\ell\wedge(e_{i}-e_{j})\neq0$.
\end{lem}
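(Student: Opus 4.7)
The plan is to prove both claims by unpacking the description of $\slowshift ji$ from equation (\ref{eq:SlowShiftExtalg}), working element by element on $L$. For the first claim, given $m \in L$ written as $m = x + e_j \wedge y$ with $x, y \in \extalg V^{(j)}$, the image $\slowshift ji m$ lies in $\slowshift ji L$ and is annihilated by $\ell$. If $x + e_i \wedge y \neq 0$, then $\slowshift ji m = x + e_i \wedge y$ gives the relation $\ell \wedge x = -\ell \wedge e_i \wedge y$; expanding $\ell \wedge (e_i - e_j) \wedge m$ via anticommutativity, substituting this relation, and using $e_i \wedge e_i = 0$ should produce cancellation, leaving $-\ell \wedge e_j \wedge (x + e_i \wedge y) = 0$. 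In the degenerate case $x + e_i \wedge y = 0$, one sees directly that $m = -(e_i - e_j) \wedge y$, so $(e_i - e_j) \wedge m = 0$ trivially.

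For the second claim, I would argue by contrapositive: assume $\ell \neq 0$ and $\ell \wedge (e_i - e_j) = 0$, and deduce that $L$ is trivial. The hypothesis forces $\ell$ to be a nonzero scalar multiple of $e_i - e_j$, so after rescaling I take $\ell = e_i - e_j$. Using Lemma~\ref{lem:SlowShiftMakesMonomial}, write $\slowshift ji L = L_0 \oplus (e_j \wedge L_1)$ with $L_0, L_1 \subseteq \extalg V^{(j)}$, and separate $(e_i - e_j) \wedge \slowshift ji L = 0$ by $e_j$-degree. The $e_j$-containing part involving $L_0$ forces $e_j \wedge L_0 = 0$; since $L_0 \subseteq \extalg V^{(j)}$, this gives $L_0 = 0$. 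The remaining condition $e_i \wedge e_j \wedge L_1 = 0$ together with Lemma~\ref{lem:1formAnn} forces $L_1 \subseteq e_i \wedge \extalg V^{(\{i,j\})}$, so that $\slowshift ji L \subseteq e_i \wedge e_j \wedge \extalg V^{(\{i,j\})}$. In particular, $e_i$ also annihilates $\slowshift ji L$.

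The final step is a bootstrap. Applying the already-proven first claim with $\ell$ replaced by $e_i$ yields $e_i \wedge e_j \wedge L = 0$, so every $m \in L$ decomposes as $m = e_i \wedge m_i + e_j \wedge m_j$ with $m_i \in \extalg V^{(i)}$ and $m_j \in \extalg V^{(\{i,j\})}$. Splitting $m_i = m_i^{(0)} + e_j \wedge m_i^{(1)}$ with $m_i^{(0)}, m_i^{(1)} \in \extalg V^{(\{i,j\})}$ and computing $\slowshift ji m$ via (\ref{eq:SlowShiftExtalg}) gives $\slowshift ji m = e_i \wedge (m_i^{(0)} + m_j)$ (when nonzero). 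Since this must also lie in $e_i \wedge e_j \wedge \extalg V^{(\{i,j\})}$ yet clearly has no $e_j$, the only possibility is $m_j = -m_i^{(0)}$; rearranging then gives $m = (e_i - e_j) \wedge m_i$. Hence $\ell = e_i - e_j$ annihilates $L$, contradicting nontriviality.

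The main obstacle I anticipate is the bookkeeping in this final bootstrap step. Neither the monomial structure $\slowshift ji L \subseteq e_i \wedge e_j \wedge \extalg V^{(\{i,j\})}$ nor the $2$-form condition $e_i \wedge e_j \wedge L = 0$ is by itself strong enough to force $(e_i - e_j)$-divisibility of $L$; one must combine them via the precise form of $\slowshift ji$ and track decompositions into $\extalg V^{(\{i,j\})}$-components carefully. The rest of the proof is essentially a direct calculation once the right cases are identified.
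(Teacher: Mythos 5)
Your proof of the first claim follows essentially the same computation as the paper's: in both cases one writes $m=x+e_{j}\wedge y\in L$, uses that $\ell\wedge(x+e_{i}\wedge y)=0$ (which holds whether or not $x+e_{i}\wedge y$ vanishes), and expands $\ell\wedge(e_{i}-e_{j})\wedge m$ until everything cancels; your explicit treatment of the degenerate case $x+e_{i}\wedge y=0$ is a harmless refinement. For the ``moreover'' part, however, you take a genuinely different and much heavier route. The paper picks $m\in L$ with $(e_{i}-e_{j})\wedge m\neq0$ (available by nontriviality), observes the identity $(e_{i}-e_{j})\wedge m=(e_{i}-e_{j})\wedge\slowshift ji m$, and concludes in two lines that $e_{i}-e_{j}$ fails to annihilate $\slowshift ji L$ while $\ell$ does, so the two $1$-forms are linearly independent. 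You instead argue by contrapositive: assuming $\ell$ is a nonzero multiple of $e_{i}-e_{j}$, you use Lemma~\ref{lem:SlowShiftMakesMonomial} to force $\slowshift ji L\subseteq e_{i}\wedge e_{j}\wedge\extalg V^{(\{i,j\})}$, bootstrap through the already-proved first claim to get $e_{i}\wedge e_{j}\wedge L=0$, and then track the decomposition of each $m\in L$ to show it is divisible by $e_{i}-e_{j}$, contradicting nontriviality. I checked the computations (the identification $L_{0}=0$, the conclusion $L_{1}\subseteq e_{i}\wedge\extalg V^{(\{i,j\})}$, and the final step $m_{j}=-m_{i}^{(0)}$ giving $m=(e_{i}-e_{j})\wedge m_{i}$) and they are all correct, so the argument is complete. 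What your version buys is a stronger structural statement --- an explicit description of what $L$ must look like when $e_{i}-e_{j}$ annihilates $\slowshift ji L$ --- at the cost of substantial bookkeeping; what the paper's version buys is brevity, since the single observation that $(e_{i}-e_{j})\wedge m$ is unchanged by slow shifting immediately delivers the required linear independence.
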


Broadly speaking, Lemma~\ref{lem:Technical2div} is an extension
of the fact in combinatorial set theory that if $\shift ji\mathcal{F}$
becomes trivial, then every set in $\mathcal{F}$ contains at least one
of $i,j$.
\begin{proof}[Proof (of Lemma~\ref{lem:Technical2div}).]
 By (\ref{eq:SlowShiftExtalg}), whenever $x+e_{j}\wedge y$ is in
$L$ (where the variable sets of the supports of $x$ and $y$ do not contain $e_{j}$),
we have $\ell\wedge(x+e_{i}\wedge y)=0$. Thus,
\begin{align*}
\ell\wedge(e_{i}-e_{j})\wedge(x+e_{j}\wedge y) & =\ell\wedge e_{i}\wedge x-\ell\wedge e_{j}\wedge(x+e_{i}\wedge y)\\
 & =\ell\wedge e_{i}\wedge x.
\end{align*}
 Since $\ell\wedge(x+e_{i}\wedge y)=0$, we have also $\ell\wedge e_{i}\wedge x=\ell\wedge e_{i}\wedge(x+e_{i}\wedge y)=0$.

Finally, if $L$ is nontrivial, then there is an $m=x+e_{j}\wedge y$
that is not annihilated by $e_{i}-e_{j}$. Thus, $e_{i}\wedge x-e_{j}\wedge(x+e_{i}\wedge y)\neq0$,
and in particular $x\neq-e_{i}\wedge y$. But then (\ref{eq:SlowShiftExtalg}) gives that $\slowshift ji m = x + e_i \wedge y$. It now follows by computation
that $(e_{i}-e_{j})\wedge m=(e_{i}-e_{j})\wedge \slowshift ji m$, so
that the latter is nonzero. This yields that $\ell$ and $e_{i}-e_{j}$
are linearly independent, as desired.
\end{proof}

\subsection{Proof of Theorem~\ref{thm:HMext}}

Let $L$ be a nontrivially self-annihilating subspace of maximal dimension.  As the theorem is trivial for $k=1$, assume that $k\geq 2$.
Apply slow shifting operations over all $i,j\in[n]$ and with respect
to the standard basis $\mathbf{e}$ until either $L$ stabilizes or is annihilated
by a $1$-form.

If $L$ stabilizes to a nontrivially self-annihilating subspace under
these slow shifting operations, then by Corollary~\ref{cor:SlowShiftingBorelFixed},
the resulting $L$ is monomial with respect to $\mathbf{e}$, and the desired bound follows
from (the shifted version of) Hilton-Milner for set systems, Theorem~\ref{thm:HM}.

If $\slowshift ji L$ is annihilated by a 1-form $\ell$, then we change
to the ordered basis $\mathbf{f}$ with $f_{1}=\ell$, $f_{2}=e_{j}-e_{i}$,
and $f_{3},\dots,f_{n}$ chosen arbitrarily to fill out the basis.
It follows from Lemma~\ref{lem:Technical2div} that $f_{1}$ and
$f_{2}$ are indeed linearly independent, and that $L$ is annihilated
by $f_{1}\wedge f_{2}$. Apply slow shifting operations with respect
to the basis $\mathbf{f}$ for $i,j\in\{3,\dots,n\}$ until either
$L$ stabilizes or is annihilated by a $1$-form. We may assume by
maximality that $f_{1} \wedge f_{2} \wedge \extalg^{k-2} V \subseteq L$,
and notice that Lemma~\ref{lem:Ann2formPreserved} gives this subspace
to be preserved~by the slow shifting.

If $L$ stabilizes, then by Corollary~\ref{cor:MonomialityAllBut2},
we may find a basis consisting of elements of the form $f_{1}\wedge f_{2}\wedge x$
and $(\lambda_1 f_{1} + \lambda_2 f_{2})\wedge y$. In particular, the resulting
$L$ satisfies the conditions of Lemma~\ref{lem:HMext-2ann} for
some $f,g$ in the span of $\{f_{1},f_{2}\}$. The desired bound follows.

Finally, if $\slowshift ji L$ is annihilated by a 1-form $\ell'$ for some $3\leq i<j$,
then Lemma~\ref{lem:Technical2div} gives that $\ell'\wedge(f_{j}-f_{i})$
annihilates $L$. Lemma~\ref{lem:Ann2formto1form} gives that $\ell'$
is in $\spanop\{f_{1},f_{2}\}$; take $g$ to be such that $\ell'\wedge g=f_{1}\wedge f_{2}$.
By maximality, $\ell'\wedge(f_{j}-f_{i})\wedge\extalg^{k-2}V$ is
contained in $L$.

Now $f_{1}\wedge f_{2}\wedge\extalg^{k-2}V=\ell'\wedge g\wedge\extalg^{k-2}V\subseteq L$,
and (since $g$ is in the span of $\{f_{1},f_{2}\}$) there are elements
of $\ell'\wedge(f_{j}-f_{i})\wedge\extalg^{k-2}V$ that are annihilated
by $\ell'$ but not by $g$. Thus, we may apply Lemma~\ref{lem:HMext-2ann}
to finish the proof.

\bibliographystyle{hamsplain}
\vspace*{-.03cm}
\bibliography{main}

\end{document}